\numberwithin{equation}{section}
\newtheorem{theorem}{Theorem}[section]
\newtheorem{lemma}[theorem]{Lemma}
\newtheorem{definition}[theorem]{Definition}
\newtheorem{remark}[theorem]{Remark}
\newtheorem{corollary}[theorem]{Corollary}
\newtheorem{algorithm}{Algorithm}
\newtheorem{assumption}{Assumption}
\newcommand\be{\begin{equation}}
\newcommand\ee{\end{equation}}
\renewcommand{\subset}{\subseteq}
\newcommand{\dx}{\,\text{\rm{}d}x}
\def\N{\mathbb  N}
\def\R{\mathbb  R}
\def\meas{\operatorname{meas}}
\def\supp{\operatorname{supp}}
\def\argmin{\operatorname*{arg\, min}}
\def\argmax{\operatorname*{arg\, max}}
\renewcommand{\phi}{\varphi}
\newcommand{\Utau}{{\mathcal U_\tau}}
\newcolumntype{L}{>{$}l<{$\quad}}
\newcolumntype{R}{>{$}r<{$\quad}}
\newcolumntype{C}{>{$}c<{$}}
\title{Optimal control problems with $L^0(\Omega)$ constraints: maximum principle and proximal gradient method}
\author{Daniel Wachsmuth%
\footnote{Institut f\"ur Mathematik,
Universit\"at W\"urzburg,
97074 W\"urzburg, Germany, {\tt daniel.wachsmuth@mathematik.uni-wuerzburg.de}.
This research was partially supported by the German Research Foundation DFG under project grant Wa 3626/3-2.}}
\begin{document}

\maketitle

\paragraph{Abstract.}
We investigate optimal control problems with $L^0$ constraints, which restrict the measure of the support of the controls.
We prove necessary optimality conditions of Pontryagin maximum principle type.
Here, a special control perturbation is used that respects the $L^0$ constraint.
First, the maximum principle is obtained in integral form, which is then turned into a pointwise form.
In addition, an optimization algorithm of proximal gradient type is analyzed. Under some assumptions,
the sequence of iterates contains strongly converging subsequences, whose limits are feasible and satisfy
a subset of the necessary optimality conditions.

\paragraph{Keywords.} Sparse optimal control, $L^0$ constraints, Pontryagin maximum principle, proximal gradient method.

\paragraph{MSC classification.}
49M20, 
49K20 
\section{Introduction}

We are interested in the following optimal control problem
written as an optimation problem:
\be\label{eq001}
 \min_{u\in L^2(\Omega)} f(u) + \frac\alpha2 \|u\|_{L^2(\Omega)}^2
\ee
subject to
\be\label{eq003}
\|u\|_0 \le \tau.
\ee
Here, $\Omega\subset \R^d$ is an open set supplied with the Lebesgue measure, $f:L^2(\Omega)\to \R$ abstracts the state equation and smooth ingredients of the control problem,
$\alpha\ge0$ is a parameter.
The constraint \eqref{eq003} uses the so-called $L^0$ norm (which is -- of course -- not a norm) that is defined
for measurable $u:\Omega \to \R$ by
\[
 \|u\|_0 :=\meas \{ x: \ u(x)\ne 0\}.
\]
Of course, $\tau \in (0, \meas(\Omega))$ is a meaningful restriction.

The motivation to study such problems comes from sparse control: Find a control with small support, in our case: with prescribed size of support.
The main challenge is the discontinuity and non-convexity of the $\|\cdot\|_0$-functional: Methods from differentiable or convex optimization are not applicable.
In addition, due to the lack of weak lower continuity it is not possible to ensure existence of solutions in spaces of integrable functions.
Nevertheless, we can look into optimality conditions that need to be satisfied at a solution.
In order to study necessary optimality conditions, we will employ the Pontryagin maximum principle, which is first obtained in integral form, and then
turned into a pointwise condition by means of natural arguments.

Let us mention related works. Optimal control problems with $L^0$ norm of the control in the cost function were investigated in \cite{ItoKunisch2014,DWachsmuth2019}.
An actuator design problem is studied in  \cite{KaliseKunischSturm2018}: the controlled source term in the equation is $\chi_\omega u$, where
$\chi_\omega$ is the characteristic function of $\omega$, and the subset $\omega$ and the control $u$ are optimization variables.
An additional volume constraint is posed on $\omega$, which is equivalent to a $L^0$ constraint on $\chi_\omega u$.
In that work, shape calculus and topological derivatives with respect to $\omega$ are studied. Unfortunately, no optimality conditions
involving these topological derivatives are given, which could be compared to our results. This is subject to future work.
In the recent work \cite{ButtazzoMaialeVelichkov2021}, a shape optimization problem is turned into a problem with $L^0$ constraints.
There the control problem is posed in $W^{1,p}$, and offers  different challenges than the setting considered here.
That work will become relevant if one wants to study the regularization of \eqref{eq001}--\eqref{eq003} in $W^{1,p}$ spaces,
which would  guarantee existence of solutions due to the compact embedding of $W^{1,p}$ in $L^p$.

In this article, we will prove optimality conditions of Pontryagin maximum principle type.
Related works can be found, e.g., in \cite{CasasRaymondZidani1998,Casas1994,RaymondZidani1998}.
Those results are not directly applicable in our situation, since they do not cover $L^0$ constraints.
We will use a modification
of the control perturbations considered in \cite{CasasRaymondZidani1998,Casas1994,RaymondZidani1998}
that is adapted to the $L^0$ constraints.
These will give the maximum principle in integral form, see \cref{thm_pmp_integrated}. In order to turn it into
pointwise conditions in \cref{thm_pmp_pointwise}, we study integral minimization problems in Section \ref{sec3}.

In \cref{sec5}, we investigate an proximal gradient type algorithm,
which extends our earlier works \cite{NatemeyerWachsmuth2021,DWachsmuth2019},
where optimization problems with  $L^0$ and $L^p$, $p\in (0,1)$, functionals were considered.
Due to the simple nature of its sub-problems, this method is easy to implement.
Other methods in finite-dimensional $L^0$ constrained (or cardinality constrained) optimization include
augmented Lagrangian methods \cite{KanzowRaharjaSchwartz2021} and DC-based reformulations \cite{GotohTakedaTono2018}.
We will prove some convergence results for the proximal gradient method.
As it turns out, limit points of iterates do not satisfy the necessary condition \cref{thm_noc_sec5} but only
a subset of those, see \cref{thm_strong_conv_iterates}.
We hope that this work initiates further research on algorithms with $L^0$ constraints in an infinite-dimensional setting.

\paragraph{Notation}

We will frequently use the following notation:
For a measurable set $A$, we denote its characteristic function by $\chi_A$.
The integrand in the $L^0$ norm is abbreviated by
\[
	|u|_0:=\begin{cases} 1 & \text{ if } u\ne0, \\ 0 & \text{ if } u=0. \end{cases}
\]
Then $\|u\|_0 = \int_\Omega |u(x)|_0 \dx$. Note that $u\mapsto |u|_0$ is neither continuous nor convex but lower semicontinuous.
In addition, $u\mapsto \|u\|_0$ as mapping from $L^p(\Omega)$ to $\R$ is lower semicontintinuous but not weakly lower semicontinuous.
Moreover, we will denote the support of the measurable function $u$ by
\[
 \supp u:=\{ x\in \Omega: \ u(x)\ne0\}.
\]

\section{Maximum principle for control of ordinary differential equations}
\label{sec2}

Let us briefly and formally derive the maximum principle for an optimal control problem subject to ordinary differential equations with constraint $\|u\|_0 \le \tau$,
which serves as benchmark for  more general situations.
For illustration, let us consider the following control problem in Mayer form:
Minimize
\[
 l(x(T))
\]
subject to
\[
 x'(t) = f(t,x(t),u(t)) \text{ a.e.\ on } (0,T),
\]
\[
 x(0) = x_0,
\]
\[
 u(t) \in U \text{ for almost all } t\in (0,T),
\]
and
\[
 \|u\|_0 \le \tau.
\]
Here, $T>0$ is fixed, and $x:(0,T)\to \R^n$ and $u:(0,T)\to \R$ are the state and control.
The functions $f:\R \times \R^n \times \R$ and $l:\R^n\to \R$ are assumed to be smooth for simplicity.
Employing a standard procedure,
the constraint $ \|u\|_0 \le \tau$ can be written equivalently as an additional end-point constraint on the artificial state
$x_{n+1}$ as follows:
\[
 x_{n+1}(0) = 0, \ x_{n+1}'(t) = |u(t)|_0 \text{ a.e.\ on } (0,T), \ x_{n+1}(T) \le \tau.
\]
Let us set $\tilde f(t,x,u):=(f(t,x,u),  |u|_0)$.
Then the classical maximum principle for an optimal control $\bar u$ with state $(\bar x,\bar x_{n+1})$ and adjoint $(\bar p,\bar p_{n+1})\in\R^{n+1}$ is:
there are $(\lambda_0,\lambda_{n+1})\ne 0$, $\lambda_0\ge0$, such that the following conditions are satisfied:
\[
 \bar u(t) = \argmax_{u\in U} H(t, \bar x(t), u, \bar p(t)) + \bar p_{n+1}(t) |u|_0 ,
\]
where $H(t,x,u,p):=p^Tf(t,x,u)$
is the Hamiltonian of the original problem, and $\bar p$ solves the adjoint system
\[
 -\bar p(T)=\lambda_0 l'(\bar x(T)), \  -\bar p'(t) = f_x(t,\bar x(t),\bar u(t))^T\bar p(t) \text{ a.e.\ on } (0,T)
\]
and
\[
 -\bar p_{n+1}(T) = \lambda_{n+1}, \ -\bar p_{n+1}'(t) = 0,
\]
\[
 \lambda_{n+1}\ge 0, \ \lambda_{n+1}( \bar x_{n+1}(T) - \tau) = 0.
\]
Hence, $\bar p_{n+1}$ is constant, $\bar p_{n+1}\le0$, and $-\bar p_{n+1}$ can be interpreted as Lagrange multiplier to the constraint $\|u\|_0\le \tau$.
For a precise formulation of the maximum principle, we refer to \cite{IoffeTihomirov1979}.
In order to obtain the system in qualified form, i.e., $\lambda_0>0$, one needs additional conditions (constraint qualifications).

Summarizing the above considerations,
the following two conditions will serve as necessary optimality conditions: $\bar u(t)$ maximizes the penalized Hamiltonian, i.e.,
\begin{equation}\label{eq_ode_pmp1}
 \bar u(t) = \argmax_{u\in U} H(t, \bar x(t), u, \bar p(t)) - \lambda |u|_0 ,
\end{equation}
and $\lambda\ge0$ satisfies the complementarity condition
\begin{equation}\label{eq_ode_pmp2}
	\lambda ( \|\bar u\|_0 - \tau) = 0.
\end{equation}

Let us now transfer these results to optimal control problems, where the control is no longer defined on a subset of the real line but defined on
a set $\Omega\subset\R^d$, $d>1$.
For illustration,
let now $\Omega\subset\R^d $, $d>1$, be a bounded domain.
As above, we want to
translate the control constraint $\|u\|_0\le \tau$ to an auxiliary state constraint.
In fact, if we define $y_0$ as the weak solution in $H^1(\Omega)$ of the auxiliary state equation
\[
 -\Delta y + y = |u|_0, \ \frac{\partial y}{\partial n}=0
\]
then it follows $\int_\Omega y_0\dx  = \int_\Omega |u|_0 \dx$,
and the control constraint $\|u\|_0\le \tau$ is equivalent to the constraint $\int_\Omega y_0\dx \le \tau$ on the auxiliary state $y$.
In \cite{Casas1994}, the maximum principle for problems with elliptic partial differential equations was obtained.
In order to get a system in qualified form ($\lambda_0>0$) strong stability is used: the optimal value function has to be locally Lipschitz continuous with respect to the parameter $\tau$.
To the best of our knowledge,
such a result is not available
in the literature
for the $L^0$ constraints considered here.

Thus, we will proceed differently. We will not formulate the integral control constraint as a state constraint. Rather we will modify the technique of \cite{Casas1994}
to only consider perturbations that satisfy the constraint. In this way, we get a maximum principle in integral form satisfied for all functions $v$ with $\|v\|_0\le \tau$.
This integral maximum principle can be translated into a pointwise one.
In that way, we get the final system in qualified form while circumventing the strong stability requirement.

\section{Optimality conditions for integral functionals}
\label{sec3}

First, we are going to derive optimality conditions for integral functionals.
This is later used to transform the maximum principle from integral to pointwise form.
We will consider integral functionals generated by normal integrands.
In this section, let $\Omega\subset \R^d$ be a Lebesgue measurable set.

\begin{definition}
 The function $f:\Omega \times \R\to\R\cup\{+\infty\}$ is called {\em normal integrand}  if there exist Caratheodory functions $(f_n)_{n\in \N}$ such that
 for all $u$ and almost all $x\in \Omega$
 \[
  f(x,u) = \sup_n f_n(x,u).
 \]
 for all $u$ and almost all $x\in \Omega$.
\end{definition}

This definition is from \cite[Def.\@ 1]{BerliocchiLasry73} with equivalent characterizations in \cite[Thm.\@ 2]{BerliocchiLasry73}.
%
We will now prove optimality conditions for the following problem:
Minimize
\begin{subequations} \label{eq_int_prob}
\begin{equation}
\int_\Omega g(x,u(x))\dx
\end{equation}
subject to the constraint
\begin{equation}\label{eq_l0_constr}
 u \in \Utau:= \{ u \text{ measurable}: \ \|u\|_0\le \tau\}.
\end{equation}
\end{subequations}

Here, the minimization is over all measurable $u$ such that $g(\cdot,u)$ is integrable.
Clearly, if  $\bar u$ is a solution of  \eqref{eq_int_prob}  and
$\bar u(x)\ne0$ then $g(x,\bar u(x)) = \inf_{v\in \R} g(x,v)$. The latter function will play an important role in the subsequent analysis.
We work with the following assumption.

\begin{assumption}\label{ass_int_g}
\phantom{\quad}
\begin{enumerate}
 \item $g:\Omega \times \R \to \R \cup\{+\infty\}$ is a normal integrand,
 \item
$g(\cdot,0)$ is integrable,
 \item $\tau \in (0,\meas(\Omega))$.
\end{enumerate}
\end{assumption}

\noindent
Let us define the non-positive function $\tilde v:\Omega \to \R \cup \{-\infty\}$
 by
	\begin{equation}\label{eq_def_tildev}
	\tilde v(x) := \inf_{v\in \R} g(x,v)-g(x,0).
\end{equation}
We start with a technical lemma that helps to prove integrability of $\tilde v$ under suitable assumptions.

\begin{lemma}\label{lem_un_vn}
 Let $g:\Omega \times \R \to \R \cup\{+\infty\}$ be a normal integrand.
 Then there are measurable functions $v_n$ and $u_n$ such that
 \[
  v_n(x) = \inf_{|u|\le n} g(x,u)-g(x,0) = g(x,u_n(x))-g(x,0), \ |u_n(x) |\le n
 \]
 for almost all $x\in \Omega$.
 In addition, $\tilde v$ is measurable.
\end{lemma}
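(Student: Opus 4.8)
The plan is to reduce the statement to the measurability of infima of the Caratheodory functions $f_n$ furnished by the definition of a normal integrand, combined with a monotone interchange of infimum and limit. As a preliminary normalization, I replace $f_n$ by $\max_{k\le n}f_k$, which is again Caratheodory; thus we may assume $f_1\le f_2\le\cdots$, so that $g(x,\cdot)=\sup_n f_n(x,\cdot)=\lim_n f_n(x,\cdot)$ is a nondecreasing pointwise limit of continuous functions and hence, for a.e.\ $x$, lower semicontinuous on $\R$. In particular $g(x,\cdot)$ attains its infimum over every compact interval.

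The crucial step is the identity, valid for every nonempty compact interval $K\subset\R$ and a.e.\ $x$,
\[
 \inf_{u\in K}g(x,u)=\lim_{n\to\infty}\ \inf_{u\in K}f_n(x,u)=\lim_{n\to\infty}\ \inf_{u\in K\cap\mathbb Q}f_n(x,u).
\]
The last equality is continuity of $f_n(x,\cdot)$; for the first, ``$\ge$'' follows from $f_n\le g$, while for ``$\le$'' one sets $L:=\lim_n\inf_{u\in K}f_n(x,u)$, takes any $c>L$ and points $u_n\in K$ with $f_n(x,u_n)<c$, extracts $u_{n_j}\to u^*\in K$, and uses $f_m(x,u_{n_j})\le f_{n_j}(x,u_{n_j})<c$ for $n_j\ge m$ together with continuity of $f_m(x,\cdot)$ to obtain $f_m(x,u^*)\le c$ for all $m$, hence $g(x,u^*)\le c$; letting $c\downarrow L$ gives $\inf_K g(x,\cdot)\le L$. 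Since $x\mapsto f_n(x,u)$ is measurable for each fixed $u$, the right-hand side exhibits $x\mapsto\inf_{u\in K}g(x,u)$ as a countable supremum of countable infima of measurable functions, hence measurable. Taking $K=[-n,n]$ shows that $m_n(x):=\inf_{|u|\le n}g(x,u)$ is measurable, and since $g(\cdot,0)=\sup_n f_n(\cdot,0)$ is measurable, so is $v_n:=m_n-g(\cdot,0)$ (when $g(\cdot,0)$ is a.e.\ finite, which is the only case in which the lemma is applied, cf.\ \cref{ass_int_g}; in general one reads the identities with the usual conventions on $\pm\infty$).

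For the selection, since the infimum defining $m_n(x)$ is attained and $\{u\in[-n,n]:g(x,u)\le m_n(x)\}$ is closed and nonempty, I take $u_n(x)$ to be its \emph{leftmost} point; then $|u_n(x)|\le n$ and, by construction, $g(x,u_n(x))-g(x,0)=v_n(x)$. Measurability of $u_n$ comes from the equivalence, for $a\in[-n,n]$,
\[
 u_n(x)\le a\quad\Longleftrightarrow\quad \inf_{u\in[-n,a]}g(x,u)=m_n(x),
\]
which holds because the attained infimum over the subinterval $[-n,a]$ cannot be smaller than the global minimum $m_n(x)$, so it equals $m_n(x)$ exactly when some global minimizer lies in $[-n,a]$; the right-hand side is measurable by the crucial step applied to $K=[-n,a]$, and $\{x:u_n(x)\le a\}=\emptyset$ for $a<-n$, which suffices since $u_n$ is $[-n,n]$-valued. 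Finally $m_n$ is nonincreasing in $n$ with $\inf_n m_n(x)=\inf_{v\in\R}g(x,v)$, so $\tilde v=\inf_{v\in\R}g(\cdot,v)-g(\cdot,0)=\lim_n v_n$ is measurable as a pointwise limit of measurable functions.

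I expect the main obstacle to be the interchange $\inf_K g=\lim_n\inf_K f_n$: it genuinely uses both the monotonicity of the $f_n$ (arranged above) and the compactness of $K$ — a dense-subset argument alone fails for a merely lower semicontinuous integrand — and it is exactly what lets us bypass abstract measurable-selection theory, with the leftmost-minimizer device then handling the selection elementarily. Alternatively, one could appeal to the equivalent characterizations of normal integrands in \cite{BerliocchiLasry73} together with a projection/Kuratowski--Ryll-Nardzewski measurable selection argument.
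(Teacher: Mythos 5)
Your proof is correct, and it takes a genuinely different route from the paper. The paper works entirely through measurable set-valued analysis: it shows that the epigraphical multifunction $E(x)=\{(v,t):\ g(x,v)-g(x,0)\le t\}$ is measurable (as a countable intersection of measurable multifunctions built from the Caratheodory approximants), and then invokes the marginal/selection theorem \cite[Thm.\ 8.2.11]{AubinFrankowska1990} to produce $u_n$ and $v_n$ in one stroke. You instead avoid the set-valued toolbox altogether: after normalizing to a monotone sequence of Caratheodory functions you prove the interchange $\inf_K g=\lim_n\inf_K f_n$ on compacta by hand (the subsequence argument with $f_m(x,u_{n_j})\le f_{n_j}(x,u_{n_j})<c$ is the right way to do this, and you are correct that a naive dense-subset computation of $\inf_K g$ would fail for merely l.s.c.\ $g$), obtain measurability of the marginal functions from countable infima over $K\cap\mathbb Q$, and then build the selection explicitly as the leftmost minimizer, whose measurability reduces via your equivalence $u_n(x)\le a\Leftrightarrow\inf_{[-n,a]}g(x,\cdot)=m_n(x)$ to the already-established measurability of marginals over $[-n,a]$. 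The trade-off is clear: the paper's argument is shorter, rests on citable general theorems, and would extend verbatim to vector-valued controls $u\in\R^m$, whereas your argument is elementary and self-contained but leans on the one-dimensionality of the control variable through the leftmost-minimizer device (for $\R^m$ one would need a lexicographic or Castaing-type replacement). Your side remarks are also accurate: the degenerate cases ($m_n(x)=+\infty$, or $g(x,0)=+\infty$ making $v_n$ an extended-real expression) are handled no worse than in the paper itself, which silently assumes the same conventions, and the a.e.\ qualifier is correctly traced to the null set where the representation $g=\sup_n f_n$ may fail.
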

\begin{proof}
Let $(g_n)$ be Caratheodory functions such that $g(x,u)=\sup_n g_n(x,u)$
 for all $u$ and almost all $x\in \Omega$.
Let us define the set-valued mapping
\[
 E(x) := \{ (v,t): \ g(x,v)-g(x,0) \le t\},
\]
so $E(x)$ is the epi-graph of $v\mapsto g(x,v)-g(x,0)$. Then it holds
\[
 E(x) = \bigcap_n \{ (v,t): \ g_n(x,v)-t \le g(x,0)\}.
\]
Each of the set-valued mappings in the intersection is measurable by \cite[Thm. 8.2.9]{AubinFrankowska1990}, so $E$ is measurable by \cite[Thm. 8.2.4]{AubinFrankowska1990}.
Using \cite[Thm. 8.2.11]{AubinFrankowska1990}, we get the measurability of $u_n$ and $v_n$.
Measurability of $\tilde v$ is a consequence of $\tilde v(x) = \inf_n v_n(x)$.
\end{proof}

Using the function $\tilde v$ from \eqref{eq_def_tildev}, we define the sets
\begin{equation}\label{eq_def_omegas}
    \Omega_{< s}:=\{x: \ \tilde v(x)<s\}, \ \Omega_{\le s}:=\{x: \ \tilde v(x)\le s\}.
\end{equation}

\begin{lemma}\label{lem_basic_ineq}

 Let $u\in \Utau$ be given such that $g(\cdot,u)$ is integrable.
 Let  $s\le0$ and $S\subset \Omega$ with $\meas(S)=\tau$ be such that
 $\tilde v$ (see \eqref{eq_def_tildev}) is integrable on $S$ and
\[
	 \Omega_{< s} \subset S\subset \Omega_{\le s}.
\]
Then it holds
\[
 \int_\Omega  g(x,u(x)) - g(x,0)\dx \ge \int_S \tilde v(x) \dx .
\]
This inequality is satisfied with equality only if the following conditions are satisfied:
\begin{enumerate}
\item $g(x,u(x)) - g(x,0) = \tilde v(x)$ for almost all $x\in \supp u$,
 \item $\Omega_{< s} \subset \supp u\subset \Omega_{\le s}$,
 \item $s=0$ or $\meas(\supp u)=\tau$.
\end{enumerate}
\end{lemma}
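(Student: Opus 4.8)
The plan is to estimate $\int_\Omega (g(x,u(x))-g(x,0))\dx$ from below by splitting the domain into $\supp u$ and its complement, exploiting that $g(x,v)-g(x,0)\ge \tilde v(x)$ pointwise and that $\tilde v\le 0$. On $\supp u$ we have $g(x,u(x))-g(x,0)\ge \tilde v(x)$ by the very definition \eqref{eq_def_tildev}; off $\supp u$ we have $u(x)=0$, so the integrand vanishes, which is $\ge \tilde v(x)$ only because $\tilde v\le 0$. Thus
\[
  \int_\Omega g(x,u(x))-g(x,0)\dx \;\ge\; \int_{\supp u}\tilde v(x)\dx.
\]
Now I must compare $\int_{\supp u}\tilde v$ with $\int_S \tilde v$. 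Both $\supp u$ and $S$ have measure at most $\tau$, but $\meas(S)=\tau$ exactly while $\meas(\supp u)\le \tau$. The key geometric point is that $S$ is, up to null sets, a sublevel set of $\tilde v$ at the threshold $s$ (sandwiched between $\Omega_{<s}$ and $\Omega_{\le s}$), so $S$ collects the most negative values of $\tilde v$ available. Concretely, on $S\setminus\supp u$ we have $\tilde v(x)\le s$, and on $\supp u\setminus S$ we have $\tilde v(x)\ge s$ (since $x\notin S\supset\Omega_{<s}$ forces $\tilde v(x)\ge s$). Writing $\int_S\tilde v - \int_{\supp u}\tilde v = \int_{S\setminus\supp u}\tilde v - \int_{\supp u\setminus S}\tilde v$, and using $\meas(S\setminus\supp u)=\meas(\supp u\setminus S) + (\tau - \meas(\supp u)) \ge \meas(\supp u\setminus S)$ together with the bounds $\tilde v\le s$ on the first set and $\tilde v\ge s$ on the second, I get
\[
  \int_{S\setminus\supp u}\tilde v(x)\dx \;\le\; s\,\meas(S\setminus\supp u) \;\le\; s\,\meas(\supp u\setminus S) \;\le\; \int_{\supp u\setminus S}\tilde v(x)\dx,
\]
where the middle inequality uses $s\le 0$. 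Hence $\int_S\tilde v \le \int_{\supp u}\tilde v$, and combining with the first display gives the claimed inequality. (One should first note $\int_S\tilde v$ is well-defined by hypothesis, and that $\int_{\supp u}\tilde v > -\infty$ follows since it is $\ge \int_\Omega(g(x,u(x))-g(x,0))\dx$, a finite number by integrability of $g(\cdot,u)$ and $g(\cdot,0)$.)

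For the equality characterization, I trace back through the two inequalities used. Equality forces: first, $g(x,u(x))-g(x,0)=\tilde v(x)$ a.e.\ on $\supp u$ (giving condition 1) and, on $\Omega\setminus\supp u$, $0=\tilde v(x)$ a.e., i.e.\ $\tilde v=0$ a.e.\ off $\supp u$; second, the chain $s\,\meas(S\setminus\supp u)\le s\,\meas(\supp u\setminus S)\le \int_{\supp u\setminus S}\tilde v$ must be tight. Tightness of $s\,\meas(\supp u\setminus S)\le\int_{\supp u\setminus S}\tilde v$ together with $\tilde v\ge s$ there forces $\tilde v=s$ a.e.\ on $\supp u\setminus S$; combined with $\tilde v=0$ a.e.\ off $\supp u$ and $\tilde v\le s$ on $S\setminus\supp u$, one teases out condition 2, namely $\Omega_{<s}\subset\supp u\subset\Omega_{\le s}$ up to null sets — the inclusion $\supp u\subset\Omega_{\le s}$ because $\supp u\setminus S$ sits in $\{\tilde v=s\}\subset\Omega_{\le s}$ and $\supp u\cap S\subset S\subset\Omega_{\le s}$; the inclusion $\Omega_{<s}\subset\supp u$ because $\Omega_{<s}\subset S$ and any part of $\Omega_{<s}$ outside $\supp u$ would have $\tilde v<s\le 0$, contradicting $\tilde v=0$ there. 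Tightness of $s\,\meas(S\setminus\supp u)\le s\,\meas(\supp u\setminus S)$: if $s<0$ this forces $\meas(S\setminus\supp u)=\meas(\supp u\setminus S)$, hence $\meas(\supp u)=\meas(S)=\tau$; if $s=0$ there is nothing to prove. This yields condition 3.

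The main obstacle I anticipate is the bookkeeping in the equality case: the inequalities are chained, so one has to be careful that "equality overall" propagates correctly to each link, and one must handle the degenerate possibilities ($s=0$, or $\meas(\Omega_{<s})$ already equal to $\tau$, or $\tilde v=-\infty$ on a positive-measure set, which is excluded on $S$ by the integrability hypothesis but could a priori occur elsewhere). A clean way to organize this is to introduce $h(x):=g(x,u(x))-g(x,0)$ and write the single master inequality $\int_\Omega h - \int_S\tilde v = \int_\Omega (h-\tilde v\chi_S)$ as a sum of three manifestly nonnegative pieces — $\int_{\supp u}(h-\tilde v)$, $-\int_{(\Omega\setminus\supp u)}\tilde v\chi_{\dots}$ type terms, and the balancing term built from $(s-\tilde v)$ on the symmetric difference together with the measure surplus $\tau-\meas(\supp u)$ — and then equality simply means each piece is zero. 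I would also remark at the outset that $\Omega_{<s}$ and $\Omega_{\le s}$ are measurable by Lemma \ref{lem_un_vn}, so all the sets appearing are legitimate.
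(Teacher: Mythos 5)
Your main inequality is proved correctly and by essentially the same route as the paper: bound $\int_\Omega (g(x,u(x))-g(x,0))\dx$ below by $\int_{\supp u}\tilde v\dx$, then compare $\int_{\supp u}\tilde v\dx$ with $\int_S\tilde v\dx$ via the symmetric difference, using $\tilde v\le s$ on $S\setminus\supp u$, $\tilde v\ge s$ on $\supp u\setminus S$, and $\meas(S\setminus\supp u)\ge\meas(\supp u\setminus S)$ together with $s\le0$. That part is fine (one quibble: your parenthetical justification that $\int_{\supp u}\tilde v>-\infty$ "since it is $\ge\int_\Omega(g-g(\cdot,0))$" has the inequality backwards; the correct reason is that $\tilde v$ is integrable on $\supp u\cap S$ by hypothesis and satisfies $s\le\tilde v\le0$ on $\supp u\setminus S$).

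The equality analysis, however, contains a genuine error. You claim that equality in the first step forces $\tilde v=0$ a.e.\ on $\Omega\setminus\supp u$, and you then use this to derive $\Omega_{<s}\subset\supp u$. This is wrong on two counts. First, the inequality $\int_\Omega(g(x,u(x))-g(x,0))\dx\ge\int_{\supp u}\tilde v\dx$ is, off $\supp u$, vacuous: the left integrand vanishes there and the right-hand integral does not see that region at all, so its tightness says nothing about $\tilde v$ outside $\supp u$. Second, the conclusion is false in general: the lemma's conditions permit $\tilde v=s<0$ on all of $\Omega_{\le s}\setminus\supp u$, which is typically a set of positive measure (condition 2 only asserts $\supp u\subset\Omega_{\le s}$, not the reverse). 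The correct source of the inclusion $\Omega_{<s}\subset\supp u$ is the one link of your own chain that you drop when listing what must be tight: equality in $\int_{S\setminus\supp u}\tilde v\dx\le s\,\meas(S\setminus\supp u)$ together with $\tilde v\le s$ on $S\setminus\supp u$ forces $\tilde v=s$ a.e.\ on $S\setminus\supp u$, so $\Omega_{<s}\cap(S\setminus\supp u)$ is null, and since $\Omega_{<s}\subset S$ this gives $\Omega_{<s}\subset\supp u$ up to a null set. This is exactly how the paper argues (its condition (d)). With that substitution your proof is complete; the rest of your bookkeeping for conditions 1 and 3 and for $\supp u\subset\Omega_{\le s}$ is correct.
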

\begin{proof}

Let $A:=\supp u$. Then $\meas(A) \le \tau = \meas(S)$, and it follows $\meas(A \setminus S) \le \meas(S \setminus A)$.
Using \eqref{eq_def_omegas}, we estimate
	\[\begin{split}
		\int_\Omega  g(x, u(x)) - g(x,0)\dx
		&\ge \int_A \tilde v(x) \dx    \\
		& = \int_{A \cap S} \tilde v(x) \dx + \int_{A\setminus S} \tilde v(x) \dx \\
		& \ge \int_{A \cap S} \tilde v(x) \dx + s \meas(A \setminus S) \\
		& \ge \int_{A \cap S} \tilde v(x) \dx + s \meas(S \setminus A) \\
		& \ge \int_{S} \tilde v(x) \dx .
		\end{split}
	\]
Equality in the above chain of inequalities is obtained only
if (a) $g(\cdot,u) - g(\cdot,0) = \tilde v$ on $A$, (b) $\tilde v = s$ on $A\setminus S$, hence $A  \subset \Omega_{\le s}$,
(c) $s (\meas(A \setminus S) - \meas(S \setminus A))=0$, and
(d) $s \meas(S \setminus A) = \int_{S \setminus A} \tilde v\dx$.
Condition (d) implies $\tilde v = s$ on $S \setminus A$, hence $\Omega_{< s} \subset A$.
If $s\ne0$ then condition (c) implies $\meas(A)=\tau$.
\end{proof}

With the help of these sets, we can fully characterize the solutions of \eqref{eq_int_prob}.

\begin{theorem}\label{lem_char_sol_tau}
Let \cref{ass_int_g} be satisfied.
Then $\bar u$ is a solution of \eqref{eq_int_prob}
if and only if
there are $s\le0$ and $A\subset \Omega$ with $\meas(A)=\tau$ such that
\begin{equation}\label{eq_optcon_support}
	 \Omega_{< s} \subset \supp \bar u \subset A\subset \Omega_{\le s},
\end{equation}
$\tilde v$ is integrable on $A$,
and
\begin{equation}\label{eq_optcon_u_tildev}
	g(x,\bar u(x)) - g(x,0) = \tilde v(x) \text{ for almost all  }  x\in A,
\end{equation}
where $\tilde v$ is defined in \eqref{eq_def_tildev}.
\end{theorem}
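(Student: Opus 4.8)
The plan is to prove the two implications separately. The ``if'' part is a short consequence of \cref{lem_basic_ineq}, and the ``only if'' part, which carries the bulk of the work, I would organise around a bathtub-principle argument together with the equality case of \cref{lem_basic_ineq}.

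For the ``if'' direction, assume $s\le0$ and $A$ are given as in the statement. Then $\bar u\in\Utau$ because $\supp\bar u\subset A$ and $\meas(A)=\tau$, and $g(\cdot,\bar u)$ is integrable because it equals $g(\cdot,0)+\tilde v$ on $A$ (by \eqref{eq_optcon_u_tildev} and the integrability of $\tilde v$ on $A$) and $g(\cdot,0)$ on $\Omega\setminus A$; hence $\bar u$ is admissible for \eqref{eq_int_prob} and $\int_\Omega g(x,\bar u(x))-g(x,0)\dx=\int_A\tilde v\dx$. For any admissible $u$, \cref{lem_basic_ineq} applied with this $s$ and $S=A$ — whose hypotheses are exactly the conditions imposed on $A$ — gives $\int_\Omega g(x,u(x))-g(x,0)\dx\ge\int_A\tilde v\dx$, and adding the finite number $\int_\Omega g(x,0)\dx$ shows $\bar u$ is a minimizer.

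For the ``only if'' direction, let $\bar u$ be a solution. As already noted before \cref{ass_int_g}, $g(x,\bar u(x))=\inf_{v\in\R}g(x,v)$ for a.e.\ $x\in\supp\bar u$, so \eqref{eq_optcon_u_tildev} holds on $\supp\bar u$; since $g(\cdot,\bar u)$ and $g(\cdot,0)$ are integrable, $\tilde v$ is integrable on $\supp\bar u$ and $m:=\int_\Omega g(x,\bar u(x))-g(x,0)\dx=\int_{\supp\bar u}\tilde v\dx$ is finite. Using the measurable near-minimizers $u_n$ from \cref{lem_un_vn} (truncated by $\{\tilde v\ge-N\}$ before $n$ is taken large, so the perturbations stay admissible), I would show that for every measurable $S$ with $\meas(S)\le\tau$ and $\tilde v$ integrable on $S$ there is an admissible control supported in $S$ whose objective is arbitrarily close to $\int_\Omega g(x,0)\dx+\int_S\tilde v\dx$. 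Optimality of $\bar u$ then gives $m\le\int_S\tilde v\dx$ for all such $S$, while $S=\supp\bar u$ shows $m$ equals this infimum:
\[
m=\inf\Bigl\{\textstyle\int_S\tilde v\dx:\ S\text{ measurable},\ \meas(S)\le\tau,\ \tilde v\text{ integrable on }S\Bigr\}.
\]
A bathtub-principle analysis of the right-hand side now applies: since the infimum is finite and attained, $\tilde v$ is finite a.e., $\meas(\Omega_{\le s})<\tau$ for all sufficiently negative $s$, and for the threshold $s^\star:=\inf\{s\le0:\ \meas(\Omega_{\le s})\ge\tau\}$ one has $-\infty<s^\star\le0$, $\meas(\Omega_{<s^\star})\le\tau\le\meas(\Omega_{\le s^\star})$, $\tilde v$ integrable on $\Omega_{<s^\star}$, and the infimum is attained at $S^\star:=\Omega_{<s^\star}\cup S'$ for every measurable $S'\subset\{\tilde v=s^\star\}$ with $\meas(S')=\tau-\meas(\Omega_{<s^\star})$ (such $S'$ exists since Lebesgue measure is non-atomic).

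To conclude, I would apply \cref{lem_basic_ineq} to $\bar u$ with this $s^\star$ and $S=S^\star$: its inequality $\int_\Omega g(x,\bar u(x))-g(x,0)\dx\ge\int_{S^\star}\tilde v\dx$ has left side $m$ and right side $\ge m$, so it holds with equality, and the equality conditions give $\Omega_{<s^\star}\subset\supp\bar u\subset\Omega_{\le s^\star}$ together with ``$s^\star=0$ or $\meas(\supp\bar u)=\tau$''. If $s^\star<0$, then $\meas(\supp\bar u)=\tau$ and $A:=\supp\bar u$, $s:=s^\star$ satisfy \eqref{eq_optcon_support}, \eqref{eq_optcon_u_tildev} and the integrability requirement. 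If $s^\star=0$, then $\Omega_{\le s^\star}=\Omega$ and $\{\tilde v=0\}\setminus\supp\bar u$ has measure $\meas(\Omega)-\meas(\supp\bar u)>\tau-\meas(\supp\bar u)$, so I enlarge $\supp\bar u$ by a measurable subset of $\{\tilde v=0\}$ of measure $\tau-\meas(\supp\bar u)$ to obtain $A$; on the added part $\bar u=0=\tilde v$, so \eqref{eq_optcon_u_tildev} and the integrability of $\tilde v$ on $A$ persist, and \eqref{eq_optcon_support} holds with $s:=0$. The step I expect to be the main obstacle is the measure-theoretic bookkeeping hidden in the bathtub argument — excluding $\meas\{\tilde v=-\infty\}>0$, non-integrability of $\tilde v$ on $\Omega_{<s^\star}$, and the degenerate case $\meas(\Omega_{\le s^\star})<\tau$ — each of which must be ruled out by producing genuinely admissible (in particular, integrable) perturbations of $\bar u$ from \cref{lem_un_vn} by truncation; once these are dispatched, the geometric core is routine.
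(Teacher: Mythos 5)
Your proposal is correct and follows essentially the same route as the paper: both directions rest on \cref{lem_basic_ineq} and its equality case, the admissible perturbations $\chi_S u_n$ built from \cref{lem_un_vn}, the threshold $s$ with $\meas(\Omega_{<s})\le\tau\le\meas(\Omega_{\le s})$ plus non-atomicity to carve out a set of measure exactly $\tau$, and the final case split ``$s=0$ or $\meas(\supp\bar u)=\tau$''. The measure-theoretic bookkeeping you flag as the main obstacle is dispatched in the paper exactly as you anticipate: feasibility of $\chi_B u_n$ for every $B$ with $\meas(B)\le\tau$ plus monotone convergence of $v_n\downarrow\tilde v$ yields integrability of $\tilde v$ on all such $B$ and the uniform lower bound $\int_B\tilde v\,\text{\rm{}d}x\ge\int_\Omega g(x,\bar u(x))-g(x,0)\,\text{\rm{}d}x$ in one stroke.
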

\begin{proof}
Let $\bar u$ be a solution of \eqref{eq_int_prob}.
Let $(v_n)$ and $(u_n)$ be given by \cref{lem_un_vn}.
By construction, $(v_n(x))$ is monotonically decreasing and $v_n(x)\to\tilde v(x)$ for almost all $x\in \Omega$.
Let $B\subset \Omega$ with $\meas(B)\le \tau$.
We want to show that $\chi_Bu_n$ is feasible for \eqref{eq_int_prob}.
It remains to argue that $g(\cdot,\chi_Bu_n)$ is integrable.
If the negative part of $g(\cdot,\chi_Bu_n)$ would not be integrable, then problem \eqref{eq_int_prob} would be unsolvable,
as we could find subsets $B_k\subset B$ such that $\int_\Omega g(x,\chi_{B_k} u_n)\dx \to -\infty$ for $k\to\infty$.
So the negative part of $g(\cdot,\chi_Bu_n)$ is integrable,
and the integrability of $g(\cdot,\chi_Bu_n)$ is a consequence of $g(x,\chi_B u_n(x)) - g(x,0) \le 0$ for almost all $x$.

Then $\chi_B u_n$ is feasible for \eqref{eq_int_prob},
which implies
\[
 0 \ge \int_B v_n\dx = \int_\Omega g(x,\chi_B u_n(x)) - g(x,0)\dx
 \ge \int_\Omega g(x,\bar u(x)) - g(x,0)\dx.
\]
By the monotone convergence theorem, it follows that $\tilde v$ is integrable on $B$ and
\begin{equation}\label{eq_tildev_lower}
\int_B \tilde v \dx \ge \int_\Omega g(x,\bar u(x)) - g(x,0)\dx.
\end{equation}

	The increasing functions $s\mapsto \meas(\Omega_{<s})$ and $s\mapsto \meas(\Omega_{\le s})$
	are continuous from the left and from the right, respectively.
	Given $\tau$, there is a  uniquely determined  $s\le0$ such that $\meas(\Omega_{<s}) \le \tau \le \meas(\Omega_{\le s})$.
	Since the measure space is non-atomic, the celebrated Sierpi\'nski theorem implies that there is $S\subset\Omega$ such that $\Omega_{< s} \subset S \subset \Omega_{\le s}$ and $\meas(S)=\tau$.

By the first part of the proof, $\tilde v$ is integrable on $S$.
Then $s$ and $S$ satisfy the requirements of \cref{lem_basic_ineq}.
Using \cref{lem_basic_ineq} and  \eqref{eq_tildev_lower}, we get
\[
 \int_\Omega  g(x,\bar u(x)) - g(x,0)\dx \ge \int_S \tilde v(x) \dx  \ge \int_\Omega g(x,\bar u(x)) - g(x,0)\dx.
\]
Hence, the inequality of \cref{lem_basic_ineq}  is satisfied with equality, which implies $ \Omega_{< s} \subset \supp \bar u \subset \Omega_{\le s}$.
It remains to utilize that $s=0$ or $\meas(\supp\bar u)=\tau$. If $\meas(\bar u)=\tau$ then \eqref{eq_optcon_support} and \eqref{eq_optcon_u_tildev}
are satisfied with $A:=\supp \bar u$.
If $\meas(\supp \bar u)<\tau$ then $s=0$, 
and we can find a set $A$ with $\meas(A)=\tau$ and $\supp \bar u \subset A \subset \Omega = \Omega_{\le 0}$, which is \eqref{eq_optcon_support}.
Using \cref{lem_basic_ineq} and $\bar u(x)=0$ on $A\setminus \supp\bar u$, we see that \eqref{eq_optcon_u_tildev} is satisfied.

Let now $\bar u,s,A$ satisfy \eqref{eq_optcon_support} and \eqref{eq_optcon_u_tildev} such that $\tilde v$ is integrable on $A$. Let $u\in \Utau$. Then by \cref{lem_basic_ineq} with $S:=A$
we find
\begin{multline*}
 \int_\Omega  g(x,u(x)) - g(x,0)\dx \ge \int_A \tilde v(x) \dx  =\int_A g(x,\bar u(x)) - g(x,0)\dx \\
 = \int_\Omega g(x,\bar u(x)) - g(x,0)\dx,
\end{multline*}
and $\bar u$ solves \eqref{eq_int_prob}.
\end{proof}

\begin{corollary}\label{cor_comp_vtilde}
Let \cref{ass_int_g} be satisfied.
 Let $\bar u$ be a solution of \eqref{eq_int_prob}.
  Let $s\le 0$ be given by \cref{lem_char_sol_tau}.
  Then for almost all $x\in \Omega$
  \[
  	|\bar u(x)|_0 \cdot ( \tilde v(x) - s) \le 0.
  \]
\end{corollary}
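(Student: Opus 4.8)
My plan is to deduce the statement directly from the support characterization in \cref{lem_char_sol_tau}, treating separately the region where $\bar u$ vanishes and the region where it does not.

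First, I recall from \eqref{eq_def_tildev} that, by choosing $v=0$ in the infimum, $\tilde v(x)\le 0$ for almost all $x$, and that the $s$ furnished by \cref{lem_char_sol_tau} is a finite number with $s\le 0$. I would then split $\Omega = \supp\bar u \cup (\Omega\setminus\supp\bar u)$. On $\Omega\setminus\supp\bar u$ one has $|\bar u(x)|_0 = 0$ for almost all $x$; moreover \eqref{eq_optcon_support} gives $\Omega_{<s}\subset\supp\bar u$, so for $x\notin\supp\bar u$ we obtain $\tilde v(x)\ge s$, i.e.\ the factor $\tilde v(x)-s$ lies in $[0,-s]$ and is in particular finite. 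Hence $|\bar u(x)|_0\,(\tilde v(x)-s) = 0 \le 0$ there, with no indeterminate product arising.

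On $\supp\bar u$ one has $|\bar u(x)|_0 = 1$ for almost all such $x$, so it remains to check that $\tilde v(x)-s\le 0$ almost everywhere on $\supp\bar u$. This is precisely the inclusion $\supp\bar u\subset A\subset\Omega_{\le s}$ from \eqref{eq_optcon_support}: it yields $\tilde v(x)\le s$ a.e.\ on $\supp\bar u$, whence $|\bar u(x)|_0\,(\tilde v(x)-s) = \tilde v(x)-s \le 0$ (the value $-\infty$ being permitted and harmless). Combining the two regions gives the claimed inequality for almost all $x\in\Omega$.

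I do not expect any genuine obstacle: the statement is an immediate corollary of \cref{lem_char_sol_tau}. The only point that deserves a sentence is the well-definedness of the product $|\bar u|_0\,(\tilde v - s)$, which is secured by the observation above that $\tilde v$ is finite off $\supp\bar u$ while on $\supp\bar u$ the surviving factor $\tilde v - s$ is already nonpositive.
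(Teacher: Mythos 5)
Your proof is correct and follows essentially the same route as the paper: the inclusion $\supp\bar u\subset A\subset\Omega_{\le s}$ from \eqref{eq_optcon_support} gives $\tilde v\le s$ wherever $\bar u\ne 0$, and the product vanishes elsewhere. The extra care you take about well-definedness of the product is a harmless elaboration the paper omits.
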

\begin{proof}
This follows from \cref{lem_char_sol_tau}, \eqref{eq_optcon_support}: If $\bar u(x)\ne 0$, then $\tilde v(x) \le s$.
\end{proof}

Let us define the value function of \eqref{eq_int_prob} by
\[
 V(\tau):= \inf_{u\in \Utau}  \int_\Omega g(x,u(x))\dx .
\]
Using the above characterization of solutions, we have the following strong stability result.

\begin{lemma}
Let \cref{ass_int_g} be satisfied.
Let $\tau,\tau' \in (0,\meas(\Omega))$ with $\tau < \tau'$ be given.
Then $0\le V(\tau)-V(\tau') \le |s| (\tau - \tau')$, where $s$ is associated to $\tau$ by \cref{lem_char_sol_tau}.
\end{lemma}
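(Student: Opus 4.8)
The plan is to reduce the estimate to the function $\tilde v$ of \eqref{eq_def_tildev} and to exploit the description of optimal values provided by \cref{lem_basic_ineq} and \cref{lem_char_sol_tau}. Since $g(\cdot,0)$ is integrable, for every $\sigma\in(0,\meas(\Omega))$ we may write $V(\sigma)=\int_\Omega g(x,0)\dx+W(\sigma)$ with
\[
W(\sigma):=\inf_{u\in\mathcal U_\sigma}\int_\Omega\big(g(x,u(x))-g(x,0)\big)\dx ,
\]
so that $V(\tau)-V(\tau')=W(\tau)-W(\tau')$. The inclusion $\mathcal U_\tau\subset\mathcal U_{\tau'}$ (valid since $\tau<\tau'$) gives $W(\tau)\ge W(\tau')$, which is the left inequality $V(\tau)-V(\tau')\ge0$. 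It remains to bound $W(\tau)-W(\tau')$ from above by $|s|(\tau'-\tau)$, the asserted Lipschitz estimate with constant $|s|$.

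Next I would identify $W(\tau)$ and $W(\tau')$. Let $s\le0$ be the number associated to $\tau$ as in the statement (equivalently, in the proof of \cref{lem_char_sol_tau}), characterized by $\meas(\Omega_{<s})\le\tau\le\meas(\Omega_{\le s})$, and pick, by Sierpi\'nski's theorem (the measure is non-atomic), a set $S$ with $\meas(S)=\tau$ and $\Omega_{<s}\subset S\subset\Omega_{\le s}$; define $s'$ and $S'$ analogously for $\tau'$. With $(u_n),(v_n)$ from \cref{lem_un_vn}, the functions $v_n=g(\cdot,u_n)-g(\cdot,0)$ decrease pointwise to $\tilde v\le0$; since $\chi_Su_n\in\mathcal U_\tau$, the monotone convergence theorem gives
\[
W(\tau)\le\lim_n\int_\Omega\big(g(x,\chi_Su_n(x))-g(x,0)\big)\dx=\lim_n\int_S v_n\dx=\int_S\tilde v\dx ,
\]
and when this integral is finite \cref{lem_basic_ineq} (with this $S$) yields the reverse inequality, so $W(\tau)=\int_S\tilde v\dx$; the case $\int_S\tilde v\dx=-\infty$ forces $V(\tau)=V(\tau')=-\infty$ and is trivial. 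Likewise $W(\tau')=\int_{S'}\tilde v\dx$.

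It then remains to estimate $\int_S\tilde v\dx-\int_{S'}\tilde v\dx$. Splitting $S$ and $S'$ over $S\cap S'$, $S\setminus S'$, $S'\setminus S$ and cancelling the common part,
\[
W(\tau)-W(\tau')=\int_{S\setminus S'}\tilde v(x)\dx-\int_{S'\setminus S}\tilde v(x)\dx .
\]
Since $S\setminus S'\subset S\subset\Omega_{\le s}$ we have $\tilde v\le s$ on $S\setminus S'$, and since $\Omega_{<s}\subset S$ we have $S'\setminus S\subset\Omega\setminus\Omega_{<s}$, hence $\tilde v\ge s$, i.e.\ $-\tilde v\le|s|$, there. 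Therefore
\[
W(\tau)-W(\tau')\le s\,\meas(S\setminus S')+|s|\,\meas(S'\setminus S)=|s|\big(\meas(S'\setminus S)-\meas(S\setminus S')\big)=|s|(\tau'-\tau),
\]
where the last step uses $\meas(S'\setminus S)-\meas(S\setminus S')=\meas(S')-\meas(S)=\tau'-\tau$. Together with the first paragraph this proves the lemma.

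The only real obstacles are the integrability of $\tilde v$ on $S$ — needed so that the integrals in the third step may be split and the common part cancelled — together with the existence of a set $S$ with the sandwich property and $\meas(S)=\tau$, which however is exactly the measure-theoretic fact already established in the proof of \cref{lem_char_sol_tau}. Both can in fact be bypassed: applying the construction not to an exact optimizer but to a near-optimal $u'\in\mathcal U_{\tau'}$ with support $A'$ and deleting from $A'$ a subset of $A'\setminus\Omega_{<s}$ of measure $\max(0,\meas(A')-\tau)\le\tau'-\tau$ — on which $g(\cdot,u')-g(\cdot,0)\ge\tilde v\ge s$, with $\meas(\Omega_{<s})\le\tau$ guaranteeing there is enough room — produces a competitor in $\mathcal U_\tau$ whose cost exceeds that of $u'$ by at most $|s|(\tau'-\tau)$, so that $V(\tau)\le V(\tau')+|s|(\tau'-\tau)$ follows without any finiteness or existence hypothesis beyond the trivial case.
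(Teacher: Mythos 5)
Your proof is correct, and it reaches the same final bound as the paper, namely $V(\tau)-V(\tau')\le -s(\tau'-\tau)=|s|(\tau'-\tau)$; note that the factor $(\tau-\tau')$ in the statement is evidently a sign typo for $(\tau'-\tau)$, since with $\tau<\tau'$ the written bound would force $V(\tau)=V(\tau')$. Your route differs from the paper's in two respects. First, the paper fixes \emph{solutions} $u_\tau,u_{\tau'}$ and reads off $V(\tau)=\int_A\tilde v\dx$, $V(\tau')=\int_{A'}\tilde v\dx$ (up to the common constant $\int_\Omega g(x,0)\dx$) from \cref{lem_char_sol_tau}, whereas you identify $W(\sigma)=\int_S\tilde v\dx$ directly at the level of infima via \cref{lem_basic_ineq} and the monotone-convergence argument; this is slightly more general, and your closing remark about working with a near-optimal $u'\in\mathcal U_{\tau'}$ addresses a point the paper glosses over, namely that a minimizer for $\tau'$ need not exist. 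Second, the paper splits into the cases $s<s'$ (where $A\subset A'$) and $s=s'$, while your single computation over $S\setminus S'$ and $S'\setminus S$, using only $\tilde v\le s$ on $S\subset\Omega_{\le s}$ and $\tilde v\ge s$ off $\Omega_{<s}\subset S$, handles both cases at once and never needs $s'$; this is arguably cleaner. The one point you should make explicit is the cancellation of the common part $S\cap S'$: integrability of $\tilde v$ on $S$ comes from the $\tau$-problem, and on $S'\setminus S\subset\Omega\setminus\Omega_{<s}$ one has $s\le\tilde v\le 0$ with $\meas(S'\setminus S)\le\tau'<\infty$, so $\tilde v$ is integrable on $S\cup S'$ and the splitting is legitimate.
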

\begin{proof}
	Let $u_\tau, u_{\tau'}$ be solutions to $\tau,\tau'$. Due to \cref{lem_char_sol_tau} there are $s,s'$, $A,A'$ such that
	$\meas(A) = \tau$, $\meas(A') = \tau'$, and
	\[
	 \Omega_{< s} \subset A \subset \Omega_{\le s} , \quad \Omega_{< s'} \subset A' \subset \Omega_{\le s}.
	\]
	If $s<s'$ then $\Omega_{\le s} \subset  \Omega_{< s'}$ and $A\subset A'$, which implies
	\[\begin{split}
		\int_\Omega g(x, u_\tau(x))\dx - \int_\Omega g(x, u_{\tau'}(x))\dx
		&= \int_A \tilde v\dx - \int_{A'} \tilde v \dx \\
		&= - \int_{A'\setminus A} \tilde v \dx \\
		& \le - s \meas(A'\setminus A) = - s(\tau' - \tau).
	\end{split}\]
	If $s=s'$ then
	\begin{multline*}
		\int_\Omega g(x, u_\tau(x))\dx - \int_\Omega g(x, u_{\tau'}(x))\dx
		= \int_{A \setminus \Omega_{< s}}\tilde v\dx - \int_{A' \setminus \Omega_{< s}} \tilde v \dx \\
		= s (\meas(A \setminus \Omega_{< s}) -\meas(A' \setminus \Omega_{< s}))
		= - s(\tau' - \tau),
	\end{multline*}
resulting in the same estimate.
\end{proof}

In addition, we obtain the following result,
which says that
$-s$ can be interpreted as Lagrange multiplier to the constraint $\|u\|_0\le \tau$.

\begin{corollary}\label{cor25}
Let \cref{ass_int_g} be satisfied.
 Let $\bar u$ be a solution of \eqref{eq_int_prob}.
  Let $s\le 0$ be given by \cref{lem_char_sol_tau}.
Then we have
 \begin{equation}\label{eq_comp_cond}
  s \cdot ( \tau - \|\bar u\|_0) = 0.
 \end{equation}
\end{corollary}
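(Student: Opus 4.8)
The claim \eqref{eq_comp_cond} is a complementarity relation between $s\le 0$ and $\tau-\|\bar u\|_0$. Since $\bar u\in\Utau$ we have $\|\bar u\|_0\le\tau$, so $\tau-\|\bar u\|_0\ge 0$ and hence $s\cdot(\tau-\|\bar u\|_0)\le 0$ automatically. The whole task therefore reduces to excluding a strict inequality, i.e.\ to showing that $s<0$ forces $\|\bar u\|_0=\tau$; the case $s=0$ makes \eqref{eq_comp_cond} trivial. So I would start by splitting into these two cases.

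For the case $s<0$, I would invoke \cref{lem_char_sol_tau} to obtain a set $A$ with $\meas(A)=\tau$ satisfying \eqref{eq_optcon_support}, in particular $\supp\bar u\subset A\subset\Omega_{\le s}$, together with the pointwise identity \eqref{eq_optcon_u_tildev}. From $A\subset\Omega_{\le s}$ and the definition \eqref{eq_def_omegas} one gets $\tilde v(x)\le s<0$ for almost all $x\in A$. Now suppose, for contradiction, that $\bar u$ vanishes on a subset of $A$ of positive measure. On that subset \eqref{eq_optcon_u_tildev} would read $\tilde v(x)=g(x,0)-g(x,0)=0$, contradicting $\tilde v(x)\le s<0$. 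Hence $\bar u(x)\ne 0$ for almost every $x\in A$, so $A\subset\supp\bar u$ up to a null set, giving $\|\bar u\|_0=\meas(\supp\bar u)\ge\meas(A)=\tau$; combined with $\|\bar u\|_0\le\tau$ this yields $\|\bar u\|_0=\tau$, and \eqref{eq_comp_cond} follows.

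There is no real obstacle here: the argument just repackages the alternative ``$s=0$ or $\meas(\supp\bar u)=\tau$'' that already surfaced inside the proof of \cref{lem_basic_ineq}, and no new estimate is needed. The only point deserving a word of care is that the step ``$0$ is not a minimizer of $g(x,\cdot)$ for a.e.\ $x\in A$'' genuinely exploits the \emph{strict} inequality $s<0$; when $s=0$ the set $A\setminus\supp\bar u$ may well have positive measure (the support bound is simply inactive), which is precisely why the conclusion is stated as a product. One could alternatively extract the needed fact directly from the proof of \cref{lem_char_sol_tau} (or read it off \cref{cor_comp_vtilde}), but deriving it from the characterization \eqref{eq_optcon_support}--\eqref{eq_optcon_u_tildev} keeps the corollary self-contained.
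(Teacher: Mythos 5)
Your proof is correct and is essentially the paper's argument in contrapositive form: the paper assumes $\|\bar u\|_0<\tau$ and deduces $s=0$ from the fact that \eqref{eq_optcon_u_tildev} forces $\tilde v=0$ on the positive-measure set $A\setminus\supp\bar u\subset\Omega_{\le s}$, which is exactly the mechanism you use to show $s<0$ implies $\|\bar u\|_0=\tau$. No substantive difference.
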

\begin{proof}
Suppose $\|\bar u\|_0< \tau$.
By \cref{lem_char_sol_tau} there is $A$ with  $\meas(A)=\tau$ and
	 $ \supp \bar u \subset A\subset \Omega_{\le s}$.
Due to \eqref{eq_optcon_u_tildev}, $\tilde v=0$ on $A\setminus \supp \bar u \subset  \Omega_{\le s}$, where $A\setminus \supp \bar u$ has positive measure.
Hence, $s=0$ follows by definition of $\Omega_{\le s}$, see \eqref{eq_def_omegas}.
\end{proof}

Furthermore, $\bar u$ is a solution of unconstrained penalized problems,
where $-s$ plays the role of a penalization parameter.

\begin{corollary}\label{cor_int_penalty}
Let \cref{ass_int_g} be satisfied.
 Let $\bar u$ be a solution of \eqref{eq_int_prob}.
Let $\lambda:=-s\ge0$, where $s$ is given by \cref{lem_char_sol_tau}.
Then $\bar u$ is a solution of
\[
 \min_{u} \int_\Omega g(x,u(x)) + \lambda |u(x)|_0 \dx.
\]
and a solution of
\[
 \min_{u} \int_\Omega g(x,u(x)) \dx + \lambda ( \|u\|_0 - \tau)^+.
\]
\end{corollary}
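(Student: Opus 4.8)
The plan is to treat the two penalized problems in turn: the first one is obtained directly from the characterization of solutions in \cref{lem_char_sol_tau}, and the second one is then deduced from the first together with the complementarity relation in \cref{cor25}.

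For the penalized problem with integrand $g(x,u(x)) + \lambda|u(x)|_0$, I would first establish the pointwise lower bound
\[
  g(x,u(x)) + \lambda|u(x)|_0 - g(x,0) \ge \min\{0,\ \tilde v(x) + \lambda\}
\]
for every measurable $u$ and almost every $x\in\Omega$, by distinguishing the cases $u(x)=0$ (where the left-hand side equals $0 \ge \min\{0,\tilde v(x)+\lambda\}$) and $u(x)\ne 0$ (where it is $\ge \tilde v(x)+\lambda$ by the definition \eqref{eq_def_tildev} of $\tilde v$). The right-hand side is nonzero only on $\Omega_{<s}$, and since $\Omega_{<s}\subset\supp\bar u\subset A$ by \eqref{eq_optcon_support} and $\tilde v$ is integrable on $A$ by \cref{lem_char_sol_tau} (while $\meas(\Omega_{<s})\le\tau<\infty$), that lower bound is integrable. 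Integrating it and using integrability of $g(\cdot,0)$ shows that the penalized objective is well-defined in $(-\infty,+\infty]$ and bounded below by $\int_\Omega g(x,0)\dx + \int_\Omega \min\{0,\tilde v(x)+\lambda\}\dx$. Then I would check that $\bar u$ attains this bound pointwise: on $\supp\bar u$ one has $g(x,\bar u(x)) - g(x,0) = \tilde v(x)$ by \eqref{eq_optcon_u_tildev} and $\tilde v(x)\le s = -\lambda$ by $\supp\bar u\subset\Omega_{\le s}$, so equality holds with value $\tilde v(x)+\lambda$; off $\supp\bar u$ one has $\bar u(x)=0$, and $\tilde v(x)\ge s = -\lambda$ because $\Omega_{<s}\subset\supp\bar u$, so equality holds with value $0$. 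Hence $\bar u$ minimizes the penalized functional.

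For the second problem I would use that $\bar u$ solves the first one, i.e.\ $\int_\Omega g(x,\bar u(x))\dx + \lambda\|\bar u\|_0 \le \int_\Omega g(x,u(x))\dx + \lambda\|u\|_0$ for all measurable $u$. By the complementarity condition \cref{cor25}, $s(\tau-\|\bar u\|_0)=0$, hence $\lambda\|\bar u\|_0 = \lambda\tau$; combining this with $\|u\|_0 - \tau \le (\|u\|_0-\tau)^+$ and $\lambda\ge 0$ gives $\int_\Omega g(x,\bar u(x))\dx \le \int_\Omega g(x,u(x))\dx + \lambda(\|u\|_0-\tau)^+$. Since $\|\bar u\|_0\le\tau$ implies $(\|\bar u\|_0-\tau)^+=0$, the left-hand side is exactly the value of the second objective at $\bar u$, which yields optimality.

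The main obstacle is essentially bookkeeping: ensuring the competitor objective $\int_\Omega g(x,u(x)) + \lambda|u(x)|_0\dx$ is never $-\infty$ (handled by the integrable lower bound above together with integrability of $g(\cdot,0)$), and carefully verifying the pointwise equality for $\bar u$ using \emph{both} inclusions in \eqref{eq_optcon_support}. No analytic input beyond \cref{lem_char_sol_tau} and \cref{cor25} is required.
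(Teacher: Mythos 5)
Your proof is correct, but the argument for the first penalized problem takes a genuinely different route from the paper's. The paper recycles the integral machinery of \cref{lem_basic_ineq}: with $B:=\supp u$ it chains the inequalities $\int_\Omega g(x,u)-g(x,0)\dx \ge \int_B\tilde v\dx \ge \int_{A\cap B}\tilde v\dx + s\meas(B\setminus A)$ and $\int_{A\cap B}\tilde v\dx + s\meas(A\setminus B)\ge\int_A\tilde v\dx$, and then converts $s(\meas(B\setminus A)-\meas(A\setminus B))$ into $s(\|u\|_0-\|\bar u\|_0)$ by measure arithmetic using $\|\bar u\|_0\le\meas(A)$ and $s\le0$. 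You instead prove the pointwise Lagrangian bound $g(x,u(x))+\lambda|u(x)|_0-g(x,0)\ge\min\{0,\tilde v(x)+\lambda\}$ and verify, using both inclusions in \eqref{eq_optcon_support} together with \eqref{eq_optcon_u_tildev}, that $\bar u$ attains it almost everywhere. Your version is in effect stronger: it shows $\bar u(x)$ minimizes $v\mapsto g(x,v)+\lambda|v|_0$ pointwise a.e.\ (which is the form actually exploited in \cref{thm_pmp_pointwise}), and it makes the well-posedness of the penalized objective explicit via the integrable minorant, a point the paper leaves implicit. The paper's set-decomposition argument buys a single computation that delivers both penalized problems from one intermediate inequality, whereas you derive the second claim from the first plus \cref{cor25}; the two derivations of the second claim are essentially equivalent, since $\lambda\|\bar u\|_0=\lambda\tau$ by complementarity and $\lambda(\|u\|_0-\tau)\le\lambda(\|u\|_0-\tau)^+$. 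Both arguments rest on exactly the same inputs (\cref{lem_char_sol_tau} and \cref{cor25}), so no additional hypotheses are introduced.
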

\begin{proof}
Let $s$ and $A$ be as in \cref{lem_char_sol_tau}.
 Let $u$ be measurable and set $B:=\supp u$. As in the proof of  \cref{lem_basic_ineq}, we get
 \[
  \int_\Omega  g(x,u(x)) - g(x,0)\dx \ge \int_B \tilde v \dx \ge  \int_{A \cap B} \tilde v(x) \dx + s \meas(B \setminus A)
 \]
 and
 \[
  \int_{A \cap B} \tilde v(x) \dx + s \meas(A \setminus B)\ge \int_A \tilde v \dx =\int_\Omega g(x,\bar u(x)) - g(x,0)\dx,
 \]
 which results in
 \[
   \int_\Omega  g(x,u(x)) - g(x,\bar u(x)) \ge s (\meas(B \setminus A) -  \meas(A \setminus B)) .
 \]
We proceed with
\[\begin{split}
 s (\meas(B \setminus A) -  \meas(A \setminus B)) & = s(\meas(B \setminus A) - \meas(A \setminus B) +\|\bar u\|_0 - \|\bar u\|_0)\\
 &\ge s(\meas(B \setminus A) + \meas(A \cap B) - \|\bar u\|_0) \\
 &= s(\|u\|_0-\|\bar u\|_0),
\end{split}\]
where we used $\|\bar u\|_0 \le\meas(A)$, $\|u\|_0=\meas(B)$, and $s\le0$.
This proves the first claim.
 Using the result of  \cref{cor25} and $s\le0$, we get
\[
 s (\meas(B \setminus A) -  \meas(A \setminus B)) \ge s(\|u\|_0- \tau) \ge s (\|u\|_0- \tau )^+,
\]
which proves the second claim.
\end{proof}

Let us prove the following converse result.

\begin{corollary}
Let \cref{ass_int_g} be satisfied.
Let $\lambda'\ge0$.
 Let $\bar u$ with $\|\bar u\|_0=\tau$ be a solution of
\[
 \min_{v} \int_\Omega g(x,v(x)) + \lambda' |v(x)|_0 \dx.
\]
Then $\bar u$ solves \eqref{eq_int_prob}.
\end{corollary}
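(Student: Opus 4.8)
The plan is to argue directly from minimality for the penalized problem, using that the hypothesis $\|\bar u\|_0=\tau$ forces the penalty term at $\bar u$ to dominate the penalty term at every competitor feasible for \eqref{eq_int_prob}. First I would record two elementary facts. Since $\|\bar u\|_0=\tau\le\tau$, we have $\bar u\in\Utau$, so $\bar u$ is feasible for \eqref{eq_int_prob}. Moreover, as $\bar u$ is a solution of the penalized problem, the value $\int_\Omega g(x,\bar u(x))+\lambda'|\bar u(x)|_0\dx$ is finite; since $|\bar u|_0\ge0$ and $\|\bar u\|_0=\tau<\infty$, it follows that $\int_\Omega g(x,\bar u(x))\dx$ is finite as well.

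Now let $u\in\Utau$ be arbitrary; if $g(\cdot,u)$ is not integrable there is nothing to prove, so assume it is. Because $\bar u$ minimizes $v\mapsto\int_\Omega g(x,v(x))+\lambda'|v(x)|_0\dx$ over all measurable functions (for which the integral is defined), in particular
\[
\int_\Omega g(x,\bar u(x))\dx+\lambda'\|\bar u\|_0\le\int_\Omega g(x,u(x))\dx+\lambda'\|u\|_0 .
\]
Using $\|\bar u\|_0=\tau$, $\|u\|_0\le\tau$, and $\lambda'\ge0$, the right-hand penalty satisfies $\lambda'\|u\|_0\le\lambda'\tau=\lambda'\|\bar u\|_0$, and since $\lambda'\tau$ is a finite real number it may be subtracted from both sides, yielding
\[
\int_\Omega g(x,\bar u(x))\dx\le\int_\Omega g(x,u(x))\dx+\lambda'(\|u\|_0-\tau)\le\int_\Omega g(x,u(x))\dx .
\]
As $u\in\Utau$ was arbitrary, $\bar u$ is a solution of \eqref{eq_int_prob}.

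I do not expect any genuine obstacle here: the argument is a one-line comparison. The only points requiring a word of care are that $\bar u$ is indeed feasible (immediate from $\|\bar u\|_0=\tau$), that $\int_\Omega g(x,\bar u(x))\dx$ is finite so the penalty term can legitimately be cancelled, and that $\tau$ is a finite real number (it lies in the interval $(0,\meas(\Omega))$) so that $\lambda'\tau$ is finite. It is precisely the assumption $\|\bar u\|_0=\tau$, rather than merely $\|\bar u\|_0\le\tau$, that makes the penalty contribution at $\bar u$ an upper bound for that of any competitor in $\Utau$, which is what drives the proof; note also that the case $\lambda'=0$ is trivially included, the penalized problem then being the unconstrained minimization of $\int_\Omega g(x,v(x))\dx$.
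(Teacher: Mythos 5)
Your argument is correct and is essentially identical to the paper's proof: both compare the penalized objective at $\bar u$ and at an arbitrary $u\in\Utau$, use $\|\bar u\|_0=\tau$ and $\|u\|_0\le\tau$ with $\lambda'\ge0$, and cancel the finite term $\lambda'\tau$. Your additional remarks on feasibility and finiteness are sensible housekeeping but do not change the route.
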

\begin{proof}
Let $u$ be given with $\|u\|_0\le \tau$.
By optimality of $\bar u$, we have
 \begin{multline*}
 \int_\Omega g(x,\bar u(x)) \dx+ \lambda' \tau
 =\int_\Omega g(x,\bar u(x)) + \lambda' |\bar u(x)|_0 \dx
 \\
 \le \int_\Omega g(x,u(x)) + \lambda' |u(x)|_0 \dx
 \le \int_\Omega g(x,u(x)) \dx + \lambda' \tau,
\end{multline*}
which implies the claim.
\end{proof}

Let us close the section with the following observation: Every minimum of the integral functional $\int_\Omega g(x,u(x))\dx$
subject to the constraint $u\in \Utau \cap L^p(\Omega)$
is a solution of  \eqref{eq_int_prob}.

\begin{theorem}\label{lem_g_nice}
Let \cref{ass_int_g} be satisfied.
 Let $p\in [1,\infty]$. Let $\bar u\in L^p(\Omega)$ be a solution of
 \[
  \min_{u \in \Utau \cap L^p(\Omega)} \int_\Omega g(x,u(x))\dx.
  \]
  Then $\bar u$ solves \eqref{eq_int_prob}.
\end{theorem}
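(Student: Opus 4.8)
The plan is to deduce this from the full characterization of solutions in \cref{lem_char_sol_tau}, by observing that the ``only if'' direction of that theorem never really uses the whole feasible set $\Utau$: it only compares $\bar u$ against bounded competitors whose support has finite measure, and such competitors already lie in $L^p(\Omega)$ for every $p$.

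Concretely, I would first revisit the proof of \cref{lem_char_sol_tau}. All the competitors used there are of the form $\chi_B u_n$ (and $\chi_{B_k} u_n$), where $\meas(B)\le\tau<\meas(\Omega)$, the $B_k$ are subsets of $B$, and $(u_n,v_n)$ come from \cref{lem_un_vn} so that $|u_n|\le n$. Hence each such competitor is bounded and supported on a set of finite measure $\le\tau$, so it belongs to $L^\infty(\Omega)\subset L^p(\Omega)$ for every $p\in[1,\infty]$, and trivially $\|\chi_B u_n\|_0\le\tau$, i.e.\ $\chi_B u_n\in\Utau\cap L^p(\Omega)$. Therefore every appeal to optimality or to solvability in that proof can be replaced by the corresponding statement for the $L^p$-restricted problem: the contradiction argument establishing integrability of the negative part of $g(\cdot,\chi_B u_n)$ uses only sets $B_k\subset B$, which again give admissible competitors in $\Utau\cap L^p(\Omega)$, so non-integrability would contradict solvability of the $L^p$-restricted problem; and the chain $0\ge\int_B v_n\dx=\int_\Omega g(x,\chi_B u_n)-g(x,0)\dx\ge\int_\Omega g(x,\bar u)-g(x,0)\dx$ is exactly optimality of $\bar u$ against $\chi_B u_n\in\Utau\cap L^p(\Omega)$.

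With this in hand I can run the proof of \cref{lem_char_sol_tau} essentially verbatim for our $\bar u\in L^p(\Omega)$. (We may assume the infimum over $\Utau\cap L^p(\Omega)$ is finite; otherwise, since $\Utau\cap L^p(\Omega)\subset\Utau$, the value function satisfies $V(\tau)=-\infty$ and there is nothing to prove. In the finite case $\int_\Omega g(x,\bar u(x))\dx\le\int_\Omega g(x,0)\dx<\infty$, so $g(\cdot,\bar u)$ is integrable.) Thus \eqref{eq_tildev_lower} holds for every $B$ with $\meas(B)\le\tau$; Sierpi\'nski's theorem yields $s\le0$ and $S$ with $\Omega_{<s}\subset S\subset\Omega_{\le s}$ and $\meas(S)=\tau$, on which $\tilde v$ is integrable; and applying \cref{lem_basic_ineq} to $u=\bar u$ (legitimate since $\bar u\in\Utau$ with $g(\cdot,\bar u)$ integrable) together with \eqref{eq_tildev_lower} forces equality in \cref{lem_basic_ineq}. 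The equality conditions there, after the case distinction $s=0$ versus $\meas(\supp\bar u)=\tau$ as in the proof of \cref{lem_char_sol_tau}, produce a set $A$ with $\meas(A)=\tau$ for which \eqref{eq_optcon_support} and \eqref{eq_optcon_u_tildev} hold. These are precisely the hypotheses of \cref{lem_char_sol_tau}, whose ``if'' direction then gives that $\bar u$ solves \eqref{eq_int_prob}.

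The only genuine obstacle is the bookkeeping in the second paragraph: one must verify that \emph{no} step of the ``only if'' argument in \cref{lem_char_sol_tau} secretly relies on a competitor outside $L^p(\Omega)$. Because all competitors there are bounded with support of finite measure, this holds for all $p\in[1,\infty]$ at once, including $p=\infty$. An alternative would be to argue from scratch, approximating an arbitrary feasible $u$ by $\chi_{B_m\cap\{|u|\le m\}}u\in\Utau\cap L^p(\Omega)$ with $B_m\uparrow\Omega$ of finite measure; but then one needs an extra domination/monotonicity argument to pass to the limit in $\int_\Omega g(x,\cdot)\dx$, so reusing \cref{lem_char_sol_tau} is the cleaner route.
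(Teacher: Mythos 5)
Your proof is correct, and its key observation --- that the ``only if'' analysis of \cref{lem_char_sol_tau} only ever tests $\bar u$ against the competitors $\chi_B u_n$, which are bounded with support of measure at most $\tau$ and hence lie in $\Utau\cap L^p(\Omega)$ for every $p\in[1,\infty]$ --- is exactly the one the paper exploits: its proof begins by ``arguing as in the proof of \cref{lem_char_sol_tau}'' to obtain \eqref{eq_tildev_lower} for every $B$ with $\meas(B)\le\tau$. Where you diverge is in how you finish. You push on to the full characterization (Sierpi\'nski's theorem, the equality analysis of \cref{lem_basic_ineq}, the construction of the set $A$ and the number $s$) and then invoke the ``if'' direction of \cref{lem_char_sol_tau}. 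The paper instead stops at \eqref{eq_tildev_lower} and concludes in one line: for any $u$ feasible for \eqref{eq_int_prob}, setting $B:=\supp u$, the pointwise bound $g(x,u(x))-g(x,0)\ge\tilde v(x)$ on $B$ gives
\[
\int_\Omega g(x,u(x))-g(x,0)\dx\ \ge\ \int_B\tilde v\dx\ \ge\ \int_\Omega g(x,\bar u(x))-g(x,0)\dx .
\]
So the equality analysis, the choice of $s$ and $S$, and the case distinction $s=0$ versus $\meas(\supp\bar u)=\tau$ are all unnecessary for this particular statement; they produce the multiplier data $(s,A)$ as a by-product, but the theorem only asserts optimality. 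Both arguments are valid and rest on the same insight; yours is simply a longer path to the same pair of inequalities, and it has the minor cosmetic cost of carrying along the integrability and case-splitting bookkeeping that the direct route avoids.
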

\begin{proof}
Let $(u_n)$ and $(v_n)$ be given by \cref{lem_un_vn},
which implies  $u_n\in L^\infty(\Omega)$ for all $n$.
Let $B\subset \Omega$ with $\meas(B)\le \tau$ be given,
hence $\chi_B u_n\in L^p(\Omega)$ for all $n$.
Arguing as in the proof of \cref{lem_char_sol_tau}, we get
$\int_B v_n \dx\to \int_B \tilde v \dx  \ge  \int_\Omega g(x,\bar u(x)) - g(x,0)\dx$
by monotone convergence, see \eqref{eq_tildev_lower}.
Let now $u$ be feasible for \eqref{eq_int_prob}. Let $B:=\supp u$. Then
\[
 \int_\Omega g(x,u(x)) - g(x,0)\dx \ge \int_B \tilde v \dx \ge \int_\Omega g(x,\bar u(x))-g(x,0)\dx,
\]
hence $\bar u$ solves  \eqref{eq_int_prob} as well.
\end{proof}

\begin{remark}
All the results of this section are valid in the more general situation of a non-atomic, complete, $\sigma$-finite measure space.
\end{remark}

\section{Optimal control of elliptic partial differential equation with $L^0$ constraint}
\label{sec4}

In this section, we consider the following optimal control problem: Minimize
\begin{subequations}\label{eq_control_problem}
\begin{equation}\label{eq_nonlinear_fctal}
  \int_\Omega L(x,y_u(x), u(x))\dx
\end{equation}
subject to
\begin{equation}\label{eq_nonlinear_l0constr}
 \|u\|_0 \le \tau,
\end{equation}
where $y_u$ is the weak solution of the equation
\begin{equation}\label{eq_nonlinear_state}
\begin{aligned}
 (Ay)(x) &= f(x,y(x),u(x)) &&\text{ on } \Omega\\
 y&=0 &&\text{ on } \partial \Omega.
 \end{aligned}
\end{equation}
\end{subequations}
We impose the following assumption on the data of this problem:

\begin{assumption}\label{ass_elliptic}
\quad
 \begin{enumerate}
  \item $\Omega$ is an open and bounded domain in $\R^d$, $d \in \{2,3\}$, with Lipschitz boundary $\partial\Omega$.
  Let $\tau\in (0,\meas(\Omega))$.
  \item $A$ denotes a second-order elliptic operator in $\Omega$ of the type
\[Ay=-\sum_{i,j=1}^d\partial_{x_j}(a_{ij}(x)\partial_{x_i}y)\] with coefficients $a_{ij} \in C(\bar\Omega)$. In addition, there is $\Lambda>0$ such that
for almost all $x\in \Omega$
\[
\sum_{i,j=1}^d a_{ij}(x)\xi_i\xi_j \ge \Lambda |\xi|^2 \quad \forall \xi\in\mathbb{R}^d.
\]
\item The functions $f,L:\Omega \times \R \times \R$ are Caratheodory functions, i.e., $x\mapsto f(x,y,u)$ and $x\mapsto L(x,y,u)$ are measurable for all $y,u\in\R$,
and $(u,y) \mapsto f(x,y,u)$ and $(u,y) \mapsto  L(x,y,u)$ are continuous
for almost all $x\in \Omega$.
We assume that $f,L$ are continuously differentiable with respect to $y$ for almost all $x\in \Omega$ and all $u\in \R$
with $f_y(x,y,u)\le0$.
In addition, for all $M>0$ there are non-negative $a_M\in L^1(\Omega)$, $b_M\in \R$, $c_M\in L^2(\Omega)$ such that
for almost all $x\in \Omega$
\[
 |L(x,y,u)| +
 |L_y(x,y,u)| \le a_M(x) + b_M |u|^2 \quad \forall |y|\le M
\]
and
\[
|f(x,y,u)|+
 |f_y(x,y,u)| \le c_M(x) + b_M | u|  \quad \forall |y|\le M,
\]
where $f_y,L_y$ denote the partial derivatives of $f,L$ with respect to $y$.
 \end{enumerate}
\end{assumption}

Let  us briefly comment on those assumptions. The conditions on the differential equation are to ensure $W^{1,p}$ regularity of
weak solutions $y$ of \eqref{eq_nonlinear_state} for some $p>d$, which guarantees $y\in L^\infty(\Omega)$.
The conditions on $L$ and $f$ ensure that the Nemyzki operators induced by them are continuous (and differentiable with respect to $y$) from
$L^\infty(\Omega) \times L^2(\Omega)$ to $L^1(\Omega)$ and $L^2(\Omega)$, respectively.
We opted for this set of conditions in order to be able to use the results of \cite{Casas1994} on regularity of solutions of partial differential equations.
This allows us to focus on the $L^0$ constraints. Of course, other settings are possible (e.g., control constraints, other types of boundary conditions, parabolic equations).

As consequence of the assumptions, we have the following solvability and regularity result for \eqref{eq_nonlinear_state}.
\begin{theorem}
Let \cref{ass_elliptic} be satisfied. Let $u\in L^2(\Omega)$ be given. Then there is a uniquely determined $y_u\in W^{1,p}_0(\Omega)$ solving the equation \eqref{eq_nonlinear_state},
where $p>d$.
\end{theorem}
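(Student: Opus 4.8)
The statement bundles existence, uniqueness and $W^{1,p}_0$ regularity for the semilinear equation \eqref{eq_nonlinear_state}; the plan is the classical truncation-and-monotonicity scheme, in which the sign condition $f_y\le 0$ does the essential work, closed off with the linear regularity theory borrowed from \cite{Casas1994}. Fix $u\in L^2(\Omega)$. Since \cref{ass_elliptic} provides a growth bound on $f$ only in the regime $|y|\le M$, I would first truncate: for $k\in\N$ let $\pi_k:=\proj_{[-k,k]}$ and $f_k(x,y,u):=f(x,\pi_k(y),u)$, so that $|f_k(x,y,u)|\le c_k(x)+b_k|u(x)|$ for all $y$ (here $c_M\in L^2(\Omega)$, $b_M\in\R$ are as in \cref{ass_elliptic}). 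Then $y\mapsto f_k(\cdot,y,u)$ is bounded and continuous from $H^1_0(\Omega)$ into $L^2(\Omega)\hookrightarrow H^{-1}(\Omega)$, and, since $\pi_k$ is nondecreasing and $f(x,\cdot,u)$ is nonincreasing, the map $y\mapsto -f_k(\cdot,y,u)$ is monotone. Hence $T_k\colon y\mapsto Ay-f_k(\cdot,y,u)$ is a bounded, hemicontinuous, monotone and coercive operator from $H^1_0(\Omega)$ to $H^{-1}(\Omega)$ — coercive because the ellipticity constant $\Lambda$ dominates the at most linearly growing $f_k$-term after an application of Poincaré's inequality — so by the Browder--Minty theorem there is a weak solution $y_k\in H^1_0(\Omega)$ of $Ay_k=f_k(\cdot,y_k,u)$, unique because $A$ is strictly monotone.

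The crucial step is an $L^\infty(\Omega)$ bound on $y_k$ that is uniform in $k$. I would compare $y_k$ with the unique weak solution $\bar y\in H^1_0(\Omega)$ of the linear problem $A\bar y=c_1+b_1|u|$. Its right-hand side lies in $L^2(\Omega)$ and is nonnegative (it dominates $|f(\cdot,0,u)|\ge 0$ a.e.), so $\bar y\ge 0$ by the weak maximum principle, and since $d\le 3$ gives $d/2<2$, Stampacchia's $L^\infty$ estimate yields $\bar y\in L^\infty(\Omega)$ with $\|\bar y\|_{L^\infty}\le M_0$, where $M_0$ depends only on $\Omega$, $\Lambda$ and $\|c_1+b_1|u|\|_{L^2}$. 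On $\{y_k>\bar y\}\subset\{y_k>0\}$ one has $\pi_k(y_k)\ge 0$, hence $f_k(\cdot,y_k,u)\le f(\cdot,0,u)\le c_1+b_1|u|$; testing $A(y_k-\bar y)=f_k(\cdot,y_k,u)-(c_1+b_1|u|)$ with $(y_k-\bar y)^+\in H^1_0(\Omega)$ produces $\Lambda\|\nabla(y_k-\bar y)^+\|_{L^2}^2\le 0$, so $y_k\le\bar y\le M_0$ by Poincaré's inequality. The mirror-image argument — testing $A(y_k+\bar y)$ with $(y_k+\bar y)^-$ and using $\pi_k(y_k)\le 0$ together with $f_y\le 0$ on $\{y_k+\bar y<0\}$ — gives $y_k\ge -M_0$. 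Thus $\|y_k\|_{L^\infty}\le M_0$ for every $k$, and choosing $k>M_0$ forces $\pi_k(y_k)=y_k$, so $y_u:=y_k$ is a weak solution in $H^1_0(\Omega)$ of \eqref{eq_nonlinear_state} with $\|y_u\|_{L^\infty}\le M_0$.

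It remains to settle uniqueness and regularity. If $y_1,y_2\in W^{1,p}_0(\Omega)$ are weak solutions (so in particular $y_1,y_2\in H^1_0(\Omega)\cap L^\infty(\Omega)$ by the embedding $W^{1,p}_0(\Omega)\hookrightarrow C(\bar\Omega)$ for $p>d$), then testing $A(y_1-y_2)=f(\cdot,y_1,u)-f(\cdot,y_2,u)$ with $y_1-y_2$ and using that $f_y\le 0$ makes the right-hand side pair nonpositively with $y_1-y_2$ yields $\Lambda\|\nabla(y_1-y_2)\|_{L^2}^2\le 0$, hence $y_1=y_2$. For regularity, since $\|y_u\|_{L^\infty}\le M_0$ the datum $g:=f(\cdot,y_u,u)$ satisfies $|g|\le c_{M_0+1}+b_{M_0+1}|u|\in L^2(\Omega)$, so $y_u$ solves $Ay_u=g$ with $g\in L^2(\Omega)$ and $A$ having continuous coefficients on the bounded Lipschitz domain $\Omega\subset\R^d$, $d\in\{2,3\}$; since $d\le 3$ one has $L^2(\Omega)\hookrightarrow W^{-1,q}(\Omega)$ for some $q>d$, and the linear $W^{1,p}$ regularity result invoked in \cite{Casas1994} then delivers some $p>d$ with $y_u\in W^{1,p}_0(\Omega)$.

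The main obstacle is the uniform-in-$k$ $L^\infty$ bound of the second paragraph: it is precisely what lets the truncation be discarded, and it genuinely relies on the monotonicity coming from $f_y\le 0$, without which the comparison with the linear problem would break down. Everything else — the monotone-operator existence for the truncated problem, the uniqueness test, and the appeal to the cited regularity theorem — is routine.
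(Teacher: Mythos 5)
Your proof is correct, but it takes a genuinely different route from the paper only in the sense that the paper gives no argument at all: its proof is the single line ``this is a consequence of \cite[Theorem 1]{Casas1994}'', which is precisely the existence/uniqueness/$W^{1,p}_0$-regularity statement for this class of semilinear equations under these hypotheses. What you have written is essentially a self-contained reconstruction of the proof of that cited result: truncation in $y$ to restore a global growth bound, Browder--Minty for the truncated monotone problem, a comparison function plus Stampacchia's $L^\infty$ estimate (applicable since $2>d/2$ for $d\in\{2,3\}$) to obtain a $k$-uniform bound that removes the truncation, the standard monotonicity test for uniqueness, and finally the linear $W^{1,p}$ regularity theory, which you still have to import from the literature exactly as the paper does. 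The steps check out: the sign condition $f_y\le 0$ is used exactly where it is needed (monotonicity of $y\mapsto -f_k(\cdot,y,u)$, the comparison argument, and uniqueness), and the nonnegativity of $c_1+b_1|u|$ together with the weak maximum principle justifies $\bar y\ge 0$ and hence the inclusion $\{y_k>\bar y\}\subset\{y_k>0\}$ on which you invoke $f(\cdot,\pi_k(y_k),u)\le f(\cdot,0,u)$. What your version buys is transparency about which parts of \cref{ass_elliptic} are actually consumed by each step; what it costs is length and a residual dependence on the same external linear $W^{1,p}$ estimate, so it complements rather than replaces the citation.
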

\begin{proof}
This is a consequence of \cite[Theorem 1]{Casas1994}.
\end{proof}

We define the Hamiltonian of the control problem \eqref{eq_control_problem} by
\[
	H(x,y,u,\phi):= L(x,y,u) + \phi f(x,y,u).
\]
Note that the inequality constraint $\|u\|_0\le \tau$ is not taken into account in the Hamiltonian,
which is different to the approach in \cref{sec2}.
In addition, we defined the Hamiltonian in the qualified sense, that is, there is no ``multiplier'' $\phi_0\ge0$ associated to the
functional $L$ by $\phi_0L$.

We are going to prove the maximum principle in integrated form first.
The main difference to other works, e.g., \cite{Casas1994,RaymondZidani1998}, is the construction of perturbations that satisfy the constraint $\|u\|_0\le \tau$.
Here, we will adapt a result of \cite{RaymondZidani1998} to generate these perturbations.
It is based on Lyapunov's theorem.

\begin{lemma}\label{lem_lyapunov}
Let $\rho\in (0,1)$.
 Let $g_1,\dots, g_m \in L^1(\Omega)$ be given. Then there is a sequence $(E_\rho^n)$ of measurable subsets of $\Omega$ such that
 \[
  \int_{E_\rho^n} g_k \dx = \rho\int_\Omega g_k\dx \quad \forall k=1,\dots, m\ \forall n\in \N
 \]
 and
 \[
  \frac1\rho \chi_{E_\rho^n} \rightharpoonup^* 1 \quad \text{ for } n\to\infty \text{ in } L^\infty(\Omega) = L^1(\Omega)^*.
 \]
\end{lemma}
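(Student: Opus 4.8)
The plan is to obtain the sets $E_\rho^n$ in two steps: first a single "splitting" set from Lyapunov's convexity theorem, then a refinement that produces the weak-$*$ convergence by subdividing into many pieces. For the first step, I would apply Lyapunov's theorem to the vector measure $\mu(F) := \left(\int_F g_1\dx,\dots,\int_F g_m\dx\right) \in \R^m$, which is non-atomic since $\Omega$ carries Lebesgue measure; its range is convex and compact, and it contains $0 = \mu(\emptyset)$ and $\mu(\Omega) = \left(\int_\Omega g_1\dx,\dots,\int_\Omega g_m\dx\right)$. Hence $\rho\,\mu(\Omega)$ lies in the range, so there is a measurable $E$ with $\int_E g_k\dx = \rho\int_\Omega g_k\dx$ for all $k$. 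This gives the equality constraints but nothing about oscillation.

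For the weak-$*$ convergence, the standard device is to partition $\Omega$ into $n$ pieces of comparable "size" and run the Lyapunov argument on each piece, then take $E_\rho^n$ to be the union of the $\rho$-fractions. Concretely, for each $n$ I would pick a measurable partition $\Omega = \bigcup_{i=1}^{n} \Omega_i^n$ (e.g. a dyadic-type partition, or one adapted to a fixed dense sequence of sets) with the property that $\max_i \meas(\Omega_i^n) \to 0$; applying Lyapunov's theorem to the vector measure $\left(\int_F g_1\dx,\dots,\int_F g_m\dx, \meas(F)\right)$ restricted to each $\Omega_i^n$ (note we add $\meas$ as an extra coordinate so that we also control the Lebesgue measure of the pieces) yields $E_i^n \subset \Omega_i^n$ with $\int_{E_i^n} g_k\dx = \rho\int_{\Omega_i^n} g_k\dx$ for all $k$ and $\meas(E_i^n) = \rho\,\meas(\Omega_i^n)$. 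Setting $E_\rho^n := \bigcup_{i=1}^n E_i^n$ gives $\int_{E_\rho^n} g_k\dx = \rho\sum_i \int_{\Omega_i^n} g_k\dx = \rho\int_\Omega g_k\dx$ as required.

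It remains to verify $\frac1\rho\chi_{E_\rho^n} \rightharpoonup^* 1$ in $L^\infty(\Omega)$, i.e. $\int_\Omega \frac1\rho\chi_{E_\rho^n} h\dx \to \int_\Omega h\dx$ for every $h\in L^1(\Omega)$. By density of simple functions in $L^1(\Omega)$ and the uniform bound $\|\frac1\rho\chi_{E_\rho^n}\|_{L^\infty} = 1/\rho$, it suffices to check this for $h = \chi_G$, $G\subset\Omega$ measurable, i.e. that $\frac1\rho\meas(E_\rho^n \cap G) \to \meas(G)$. This is where the fineness of the partition enters: on each $\Omega_i^n$ we know $\meas(E_i^n) = \rho\,\meas(\Omega_i^n)$ and $\meas(E_i^n \cap G)$ differs from $\rho\,\meas(\Omega_i^n \cap G)$ by at most $\min\{\meas(\Omega_i^n \setminus G),\meas(\Omega_i^n\cap G)\}\le\meas(\Omega_i^n)$ in each direction; summing and using $\sum_i \meas(\Omega_i^n) = \meas(\Omega)$ together with $\max_i\meas(\Omega_i^n)\to 0$, one gets $\left|\frac1\rho\meas(E_\rho^n\cap G) - \meas(G)\right|\to 0$. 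The one genuine subtlety — and the main obstacle — is that this crude per-cell estimate $\meas(\Omega_i^n)$ does not automatically sum to something small; the cleaner route is to pin down a concrete sequence of sets (say a countable algebra generating the Borel $\sigma$-algebra, enumerated as $G_1,G_2,\dots$) and choose the $n$-th partition so fine that it "respects" $G_1,\dots,G_n$ up to error $1/n$, i.e. each $G_j$ ($j\le n$) is approximated within $1/n$ by a union of cells $\Omega_i^n$; then for fixed $G_j$ the error is eventually controlled, one concludes $\frac1\rho\chi_{E_\rho^n}\rightharpoonup^* 1$ tested against each $G_j$, and density plus the uniform $L^\infty$-bound upgrades this to all of $L^1(\Omega)$. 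This is essentially the construction in \cite{RaymondZidani1998}, adapted here by carrying the extra $\meas$-coordinate so that the pieces $E_i^n$ have exactly the prescribed Lebesgue fraction, which is what makes the weak-$*$ limit equal to the constant $1$.
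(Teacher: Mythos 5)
Your proof is correct, but it takes a genuinely different route from the paper. The paper also starts from Lyapunov's convexity theorem, but instead of partitioning $\Omega$, it fixes a dense sequence $(\phi_j)$ in $L^1(\Omega)$ and, for each $n$, applies Lyapunov \emph{once} to the augmented vector measure with components $\int_F g_1\dx,\dots,\int_F g_m\dx,\int_F\phi_1\dx,\dots,\int_F\phi_n\dx$. The resulting $E_\rho^n$ then satisfies $\int_{E_\rho^n}\phi_j\dx=\rho\int_\Omega\phi_j\dx$ for all $j\le n$, so testing $1-\frac1\rho\chi_{E_\rho^n}$ against an arbitrary $\phi\in L^1(\Omega)$ and inserting a nearby $\phi_N$ gives the bound $\frac{1-\rho}{\rho}\,\varepsilon$ directly; the weak-$*$ convergence follows in three lines, with no partitions and no generating algebra of sets. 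Your construction --- Lyapunov applied cell-by-cell on a partition refining the first $n$ members of a countable generating algebra, with $\meas$ carried as an extra coordinate --- is the classical ``chattering'' construction and is equally valid; you correctly identify that small cell diameter alone is not enough and that the partition must be adapted to the test sets, and adding the $\meas$-coordinate is indeed essential for the limit to be the constant $1$. (A small simplification of your own argument: taking the atoms of the algebra generated by $G_1,\dots,G_n$ makes each $G_j$, $j\le n$, \emph{exactly} a union of cells, so the $1/n$-approximation bookkeeping disappears.) The trade-off is that the paper's augmentation trick handles arbitrary $L^1$ test functions in one stroke and is shorter, whereas your version goes through characteristic functions and needs the extra density step from the generating algebra to all of $L^1(\Omega)$; both ultimately rest on the same $\varepsilon/3$-argument with the uniform bound $\|\frac1\rho\chi_{E_\rho^n}\|_{L^\infty}\le\frac1\rho$. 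One minor point: the construction in \cite{RaymondZidani1998} (and in the paper) is the dense-sequence augmentation, not the partition-based one, so your closing attribution is slightly off.
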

\begin{proof}
 The proof is an adaptation of the proof of \cite[Lemma 4.2]{RaymondZidani1998}. It is included for the convenience of the reader.

 Let $(\phi_n)$ be a dense subset of $L^1(\Omega)$. For $n\ge0$, define $f^n:\Omega \to \R^{m+n}$ by
 \[
  f^n = (g_1,\dots, g_m, \phi_1,\dots, \phi_n).
 \]
 By the Lyapunov convexity theorem \cite[Corollary IX.5]{DiestelUhl1977}, there is $E_\rho^n \subset\Omega$ such that
 $\rho \int_\Omega f^n \dx = \int_{E_\rho^n} f^n \dx$. By definition of $f^n$, this implies $\int_{E_\rho^n} g_k \dx = \rho\int_\Omega g_k\dx$ for all $k$.

 Let now $\phi\in L^1(\Omega)$ be given. Take $\epsilon>0$. By density, there is $N$ such that $\|\phi- \phi_N\|_{L^1(\Omega)} <\epsilon$.
 Then for all $n>N$, we get
 \[\begin{split}
  \left| \int_\Omega (1- \frac1\rho\chi_{E_\rho^n}) \phi \dx \right| & \le \left| \int_\Omega (1- \frac1\rho\chi_{E_\rho^n}) (\phi-\phi_N) \dx \right|
  + \left| \int_\Omega (1- \frac1\rho\chi_{E_\rho^n}) \phi_N \dx \right|  \\
  &\le \frac{1-\rho}\rho \epsilon+ 0,
  \end{split}
 \]
which proves the claim.
\end{proof}

\begin{corollary}\label{cor_lyapunov}
 Let  $(E_\rho^n)$ be a sequence of measurable subsets of $\Omega$ such that
 \[
  \frac1\rho \chi_{E_\rho^n} \rightharpoonup^* 1 \quad \text{ for } n\to\infty \text{ in } L^\infty(\Omega) = L^1(\Omega)^*.
 \]
 Let $h \in L^2(\Omega)$ be given. Then $(1-\frac1\rho \chi_{E_\rho^n})h \to 0$ in $W^{-1,p}(\Omega)$ where
 $p\in (1,+\infty)$ for $d=2$ and $p\in (1,6)$ for $d=3$.
\end{corollary}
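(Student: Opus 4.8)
The plan is to show that $g_n := \bigl(1 - \tfrac1\rho\chi_{E_\rho^n}\bigr)h$ is bounded in $L^2(\Omega)$ and converges weakly to zero there, and then to upgrade this to strong convergence in $W^{-1,p}(\Omega)$ using the compactness of the embedding $L^2(\Omega)\hookrightarrow W^{-1,p}(\Omega)$ for the stated range of $p$.

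First I would observe that $1-\tfrac1\rho\chi_{E_\rho^n}$ takes only the two values $1$ and $1-\tfrac1\rho$, so $\bigl|1-\tfrac1\rho\chi_{E_\rho^n}\bigr|\le \max\{1,(1-\rho)/\rho\}$ a.e.\ on $\Omega$. Hence $(g_n)$ is bounded in $L^2(\Omega)$ with a bound depending only on $\rho$ and $\|h\|_{L^2(\Omega)}$. Next, for an arbitrary $\varphi\in L^2(\Omega)$ the product $h\varphi$ lies in $L^1(\Omega)$, so the assumed weak-$*$ convergence $\tfrac1\rho\chi_{E_\rho^n}\rightharpoonup^* 1$ in $L^\infty(\Omega)=L^1(\Omega)^*$ gives
\[
 \int_\Omega g_n\varphi\dx=\int_\Omega \Bigl(1-\tfrac1\rho\chi_{E_\rho^n}\Bigr)(h\varphi)\dx\to 0 .
\]
Since $L^2(\Omega)$ is a Hilbert space and $\varphi$ was arbitrary, this means $g_n\rightharpoonup 0$ in $L^2(\Omega)$, and the whole sequence converges because the weak limit is uniquely identified.

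Then I would use that the canonical embedding $L^2(\Omega)\hookrightarrow W^{-1,p}(\Omega)$ is compact for the indicated $p$. Denoting by $p'$ the conjugate exponent of $p$, this embedding is the adjoint of the embedding $W_0^{1,p'}(\Omega)\hookrightarrow L^2(\Omega)$, which is compact by the Rellich--Kondrachov theorem: for $d=2$ and any $p\in(1,\infty)$ one has $p'\in(1,\infty)$ and $W_0^{1,p'}(\Omega)$ embeds compactly into every $L^q(\Omega)$ with $q<\infty$ (into $C(\bar\Omega)$ if $p'>2$), in particular into $L^2(\Omega)$; for $d=3$ and $p\in(1,6)$ one has $p'\in(6/5,\infty)$, and when $p'<3$ the Sobolev exponent $(p')^*=3p'/(3-p')$ satisfies $(p')^*>2$ exactly when $p'>6/5$, so again $W_0^{1,p'}(\Omega)$ embeds compactly into $L^2(\Omega)$ (no boundary regularity is needed, as $W_0^{1,p'}$-functions extend by zero to a ball). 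By Schauder's theorem the adjoint operator $L^2(\Omega)\hookrightarrow W^{-1,p}(\Omega)$ is compact as well. Since a compact linear operator maps weakly convergent sequences to norm-convergent ones, applying it to $g_n\rightharpoonup 0$ yields $g_n\to 0$ in $W^{-1,p}(\Omega)$, which is the assertion.

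The only genuinely delicate point is the exponent bookkeeping that pins down the compact Sobolev embedding precisely to $p\in(1,\infty)$ for $d=2$ and $p\in(1,6)$ for $d=3$; the boundedness of $(g_n)$, its weak convergence, and the passage to strong convergence via compactness are all routine.
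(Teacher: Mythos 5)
Your proof is correct and follows essentially the same route as the paper: establish $(1-\tfrac1\rho\chi_{E_\rho^n})h \rightharpoonup 0$ in $L^2(\Omega)$, then invoke the compactness of $L^2(\Omega)\hookrightarrow W^{-1,p}(\Omega)$, obtained by dualizing the compact Sobolev embedding $W^{1,p'}_0(\Omega)\hookrightarrow L^2(\Omega)$ for the stated range of $p$. You simply spell out the details (testing $h\varphi\in L^1(\Omega)$ against the weak-$*$ convergence, the Rellich--Kondrachov exponent check, and Schauder's theorem) that the paper leaves implicit.
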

\begin{proof}
Due to the assumptions, we have $(1-\frac1\rho \chi_{E_\rho^n})h \rightharpoonup 0$ in $L^2(\Omega)$.
 Under the conditions on $p$, the embedding $W^{1,p'}_0(\Omega)\hookrightarrow L^2(\Omega)$ is compact,
 where $p'$ is given by $\frac1p+\frac1{p'}=1$.
 Hence, the embedding $L^2(\Omega)\hookrightarrow W^{-1,p}(\Omega)$ is compact as well.
\end{proof}

Now we have all tools available to prove the maximum principle.
The proof is very similar to the proofs in \cite{Casas1994}.
Hence, we will be brief on  arguments that are similar to those in  \cite{Casas1994}.
We first prove the maximum principle in integrated form.

\begin{theorem}\label{thm_pmp_integrated}
Let $\bar u$ be a local solution of \eqref{eq_nonlinear_fctal}--\eqref{eq_nonlinear_l0constr} in the $L^2(\Omega)$-sense
with associated state $\bar y:= y_{\bar u} \in W^{1,p}_0(\Omega)$,
where $p>d$ is such that $W^{1,p'}_0(\Omega)\hookrightarrow L^2(\Omega)$,
 where $p'$ is given by $\frac1p+\frac1{p'}=1$.
Then there is $\bar\phi\in W^{1,p'}_0(\Omega)$ that solves the adjoint equation
\[
 A^*\bar\phi  = f_y(\cdot,\bar y,\bar u)\bar\phi + L_y(\cdot,\bar y,\bar u).
\]
In addition,
\[
	\int_\Omega H(x,\bar y(x), \bar u(x), \bar \phi(x))\dx \le \int_\Omega H(x,\bar y(x), v(x), \bar \phi(x))\dx
\]
for all $v\in L^2(\Omega)$ be with $\|v\|_0 \le \tau$.
\end{theorem}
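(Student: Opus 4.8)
The plan is to adapt the proof of \cite{Casas1994}; the only structural change is that the diffuse control perturbations used there must be replaced by perturbations that respect the constraint $\|u\|_0\le\tau$, and these are furnished by \cref{lem_lyapunov}. Fix $v\in L^2(\Omega)$ with $\|v\|_0\le\tau$, and set $\bar A:=\supp\bar u$ (so $\meas(\bar A)\le\tau$) and $B:=\supp v$ (so $\meas(B)\le\tau$). For $\rho\in(0,1)$ I apply \cref{lem_lyapunov} with $g_1:=\chi_{\bar A}$ and $g_2:=\chi_B$ to obtain measurable sets $E_\rho^n\subset\Omega$ with $\meas(E_\rho^n\cap\bar A)=\rho\meas(\bar A)$, $\meas(E_\rho^n\cap B)=\rho\meas(B)$ and $\frac1\rho\chi_{E_\rho^n}\rightharpoonup^*1$ in $L^\infty(\Omega)$, and define
\[
 u_\rho^n:=\bar u+\chi_{E_\rho^n}(v-\bar u)=\bar u\,\chi_{\Omega\setminus E_\rho^n}+v\,\chi_{E_\rho^n}.
\]
Since $\supp u_\rho^n\subset(\bar A\setminus E_\rho^n)\cup(B\cap E_\rho^n)$, a disjoint union, we get $\|u_\rho^n\|_0\le(1-\rho)\meas(\bar A)+\rho\meas(B)\le\tau$, so $u_\rho^n$ is feasible; moreover $\|u_\rho^n-\bar u\|_{L^2(\Omega)}^2=\int_{E_\rho^n}|v-\bar u|^2\dx\to\rho\|v-\bar u\|_{L^2(\Omega)}^2$ as $n\to\infty$, so for $\rho$ small and $n$ large $u_\rho^n$ lies in the $L^2$-neighbourhood on which $\bar u$ is optimal.

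Next I would pass to the limit in the state equation. Write $y_\rho^n:=y_{u_\rho^n}$. Since $u_\rho^n=v$ on $E_\rho^n$ and $u_\rho^n=\bar u$ elsewhere, the control-perturbation part of the right-hand side of \eqref{eq_nonlinear_state} is $f(\cdot,\bar y,u_\rho^n)-f(\cdot,\bar y,\bar u)=\chi_{E_\rho^n}\big(f(\cdot,\bar y,v)-f(\cdot,\bar y,\bar u)\big)$, which by \cref{cor_lyapunov} (applicable because $f(\cdot,\bar y,v),f(\cdot,\bar y,\bar u)\in L^2(\Omega)$ by \cref{ass_elliptic} and $\bar y\in L^\infty(\Omega)$) converges in $W^{-1,p}(\Omega)$ to $\rho\big(f(\cdot,\bar y,v)-f(\cdot,\bar y,\bar u)\big)$ as $n\to\infty$. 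Arguing as in \cite{Casas1994} (uniform a priori bounds, the compact embedding $W^{1,p}_0(\Omega)\hookrightarrow C(\bar\Omega)$ for $p>d$, and continuity of the Nemytskii operators), one obtains for fixed $\rho$ that $y_\rho^n\to y_\rho$ in $C(\bar\Omega)$, where $y_\rho\in W^{1,p}_0(\Omega)$ solves $Ay_\rho=(1-\rho)f(\cdot,y_\rho,\bar u)+\rho f(\cdot,y_\rho,v)$; letting $\rho\to0^+$ then yields $y_\rho\to\bar y$ and $\rho^{-1}(y_\rho-\bar y)\to z$ in $W^{1,p}_0(\Omega)$, where $z$ is the weak solution of
\[
 Az=f_y(\cdot,\bar y,\bar u)\,z+\big(f(\cdot,\bar y,v)-f(\cdot,\bar y,\bar u)\big).
\]
Because $f_y(\cdot,\bar y,\bar u)\le0$ this linear equation is coercive and uniquely solvable, so $z$ is well defined; the same coercivity for $A^*$, together with $L_y(\cdot,\bar y,\bar u)\in L^1(\Omega)$ (a consequence of $\bar y\in L^\infty(\Omega)$ and the growth bounds in \cref{ass_elliptic}) and the regularity theory of \cite{Casas1994}, produces the adjoint state $\bar\phi\in W^{1,p'}_0(\Omega)$ solving the stated adjoint equation.

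Finally I would invoke local optimality. Using $y_\rho^n\to y_\rho$ in $C(\bar\Omega)$, $\chi_{E_\rho^n}\rightharpoonup^*\rho$ in $L^\infty(\Omega)$, and the continuity and growth bounds on $L$, one gets
\[
 \lim_{n\to\infty}\int_\Omega L(x,y_\rho^n,u_\rho^n)\dx=(1-\rho)\int_\Omega L(x,y_\rho,\bar u)\dx+\rho\int_\Omega L(x,y_\rho,v)\dx .
\]
Combining this with the inequality $\int_\Omega L(x,\bar y,\bar u)\dx\le\int_\Omega L(x,y_\rho^n,u_\rho^n)\dx$ (valid for $\rho$ small and $n$ large by local optimality), dividing by $\rho$, letting $\rho\to0^+$, and using $y_\rho=\bar y+\rho z+o(\rho)$ gives
\[
 0\le\int_\Omega L_y(x,\bar y,\bar u)\,z(x)\dx+\int_\Omega\big(L(x,\bar y,v)-L(x,\bar y,\bar u)\big)\dx .
\]
Testing the adjoint equation with $z$ and the linearized equation with $\bar\phi$ yields $\int_\Omega L_y(\cdot,\bar y,\bar u)\,z\dx=\int_\Omega\bar\phi\,\big(f(\cdot,\bar y,v)-f(\cdot,\bar y,\bar u)\big)\dx$; inserting this and recalling $H=L+\phi f$ produces exactly $\int_\Omega H(x,\bar y,\bar u,\bar\phi)\dx\le\int_\Omega H(x,\bar y,v,\bar\phi)\dx$.

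The genuinely new ingredient, and the only place where the constraint $\|u\|_0\le\tau$ is used, is the feasibility of the perturbations $u_\rho^n$; this is why \cref{lem_lyapunov} is applied to $\chi_{\bar A}$ and $\chi_B$ simultaneously, so that the measures of $\supp\bar u$ and $\supp v$ are split by the same sets $E_\rho^n$. The main technical difficulty I expect is the two passages to the limit: the inner limit $n\to\infty$ requires controlling products of the only-weakly-convergent $\chi_{E_\rho^n}$ with the strongly convergent states $y_\rho^n$ in the nonlinear state equation and in the cost functional, while the outer limit $\rho\to0^+$ requires the differentiability of $\rho\mapsto y_\rho$ at the origin; both are handled exactly as in \cite{Casas1994}.
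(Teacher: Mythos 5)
Your proof is correct and follows essentially the same route as the paper: Lyapunov-type patch perturbations $u_\rho=\bar u+\chi_{E_\rho}(v-\bar u)$ whose sets split the supports of $\bar u$ and $v$ proportionally (hence stay feasible for the $L^0$ constraint and have $L^2$-distance of order $\sqrt\rho$ from $\bar u$), followed by the differentiability and adjoint-state machinery of \cite{Casas1994}. The only organizational difference is that the paper applies \cref{lem_lyapunov} to four integrands at once (including $(v-\bar u)^2$ and $L(\cdot,\bar y,v)-L(\cdot,\bar y,\bar u)$) and uses the quantitative $W^{-1,p}$ bound from \cref{cor_lyapunov} to fix a single set $E_\rho$ for each $\rho$, so that only the limit $\rho\searrow0$ is needed, whereas you take a genuine double limit (first $n\to\infty$ to a relaxed, convexified problem, then $\rho\to0^+$); both versions are sound.
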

\begin{proof}
Let $v\in L^2(\Omega)$ with $\|v\|_0 \le \tau$.
Set $h:=f(\cdot, \bar y, v) - f(\cdot, \bar y, \bar u)$, $m:=4$, and
 \[
  (g_1, \dots, g_m):=((v-\bar u)^2,\, |\bar u|_0,\, |v|_0,\,  L(\cdot,\bar y,v)-L(\cdot,\bar y,\bar u)).
 \]
Then by \cref{lem_lyapunov} and \cref{cor_lyapunov}, for each $\rho>0$ there is a set $E_\rho$ such that
 \[
  \int_{E_\rho} g_k \dx = \rho\int_\Omega g_k\dx \quad \forall k=1,\dots, m
 \]
and
\[
 \left\| \left(1-\frac1\rho \chi_{E_\rho}\right)h \right\|_{W^{-1,p}(\Omega)} < \rho.
\]
 Let us set
 \[
  u_\rho = \bar u + \chi_{E_\rho}(v-\bar u).
 \]
Then
\[\begin{split}
 \|u_\rho\|_0 &= \|(1-\chi_{E_\rho})\bar u +  \chi_{E_\rho}v\|_0 \\& = \|(1-\chi_{E_\rho})\bar u \|_0+\|  \chi_{E_\rho}v\|_0\\
 &= \|\bar u \|_0 - \|\chi_{E_\rho}\bar u \|_0+\|  \chi_{E_\rho}v\|_0 = (1-\rho)\|\bar u \|_0 + \rho \|v\|_0 \le \tau
\end{split}\]
and
\[
 \|u_\rho -\bar u\|_{L^2(\Omega)}^2 = \int_{E_\rho}(v-\bar u)^2\dx = \rho \|v-\bar u\|_{L^2(\Omega)}^2 .
\]
Hence, $J(\bar u) \le J(u_\rho)$ by local optimality of $\bar u$ for $\rho>0$ small enough.
Arguing as in \cite[Lemma 2]{Casas1994}, we find
\[
 0\le \lim_{\rho\searrow0} \frac1\rho (J(u_\rho) - J(\bar u)) = z^0,
\]
where
\[
 z^0 = \int_\Omega L_y(x,\bar y(x),\bar u(x))z(x) + L(x,\bar y, v(x))-L(x,\bar y(x),\bar u(x)) \dx
\]
and $z\in W^{1,p}_0(\Omega)$ satisfies
\[
 Az  = f_y(\cdot,\bar y,\bar u)z + f(\cdot,\bar y, v)-f(\cdot,\bar y,\bar u).
\]
In addition, there is $\bar\phi\in W^{1,p'}_0(\Omega)$ \cite[Theorem 2]{Casas1994} that solves the adjoint equation
\[
 A^*\bar\phi  = f_y(\cdot,\bar y,\bar u)\bar\phi + L_y(\cdot,\bar y,\bar u).
\]
This implies
\[\begin{aligned}
 0\le z^0 & = \int_\Omega L(x,\bar y, v(x))-L(x,\bar y(x),\bar u(x)) \dx \\
 & \qquad +\int_\Omega \bar\phi(x) ( f(x,\bar y(x), v(x))-f(x,\bar y(x),\bar u(x))) \dx\\
 & = \int_\Omega H(x,\bar y(x), v(x), \bar \phi(x)) - H(x,\bar y(x), \bar u(x), \bar \phi(x))\dx,
\end{aligned}\]
which is the claim.
\end{proof}

Using \cref{lem_g_nice} and the results of \cref{sec3}, we can turn the maximum principle from integrated to pointwise form.

\begin{theorem}\label{thm_pmp_pointwise}
Let $\bar u$ be a local solution of \eqref{eq_nonlinear_fctal}--\eqref{eq_nonlinear_l0constr} in the $L^2(\Omega)$-sense
with associated state $\bar y:= y_{\bar u} \in W^{1,p}_0(\Omega)$,
where $p>d$ is such that $W^{1,p'}_0(\Omega)\hookrightarrow L^2(\Omega)$
 where $p'$ is given by $\frac1p+\frac1{p'}=1$, and
adjoint $\bar\phi\in W^{1,p'}_0(\Omega)$ given by \cref{thm_pmp_integrated}.

Then there is a number $s\le0$ such that
\[
	s(\|\bar u\|_0-\tau)=0
\]
and for almost all $x\in \Omega$
\[
\bar u(x) = \argmin_{ u\in \R}  H(x,\bar y(x), u, \bar \phi(x))+ (-s) |u|_0 .
\]
In addition, we have the following properties for almost all $x\in \Omega$:
\[
\bar u(x)\ne0 \quad \Rightarrow \quad \bar u(x) = \argmin_{ u\in \R}  H(x,\bar y(x), u, \bar \phi(x)),
\]
\[
|\bar u(x)|_0 \cdot (\inf_{u\in \R}H(x,\bar y(x), u, \bar \phi(x))  - H(x,\bar y(x), 0, \bar \phi(x)) -s )\le 0.
\]
\end{theorem}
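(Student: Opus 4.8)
The plan is to reduce the statement to the analysis of \cref{sec3}, applied to the frozen integrand
$g(x,u):=H(x,\bar y(x),u,\bar\phi(x))=L(x,\bar y(x),u)+\bar\phi(x)f(x,\bar y(x),u)$.
First I would check that $g$ satisfies \cref{ass_int_g}. Since $L,f$ are Caratheodory and $\bar y\in W^{1,p}_0(\Omega)\hookrightarrow L^\infty(\Omega)$, $\bar\phi\in W^{1,p'}_0(\Omega)\hookrightarrow L^2(\Omega)$ are fixed measurable functions, the map $(x,u)\mapsto g(x,u)$ is again Caratheodory, hence a normal integrand. Integrability of $g(\cdot,0)$ follows from the growth bounds in \cref{ass_elliptic} with $M:=\|\bar y\|_{L^\infty(\Omega)}$: one has $|L(\cdot,\bar y,0)|\le a_M\in L^1(\Omega)$ and $|\bar\phi\,f(\cdot,\bar y,0)|\le|\bar\phi|\,c_M\in L^1(\Omega)$ because $\bar\phi,c_M\in L^2(\Omega)$; and $\tau\in(0,\meas(\Omega))$ is part of \cref{ass_elliptic}. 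Observe that $\inf_{u\in\R}g(x,u)=\inf_{u\in\R}H(x,\bar y(x),u,\bar\phi(x))$ and $g(x,0)=H(x,\bar y(x),0,\bar\phi(x))$, so the function $\tilde v$ of \eqref{eq_def_tildev} is exactly the bracket occurring in the last asserted inequality of the theorem, and $(-s)=\lambda\ge0$ will be the penalization parameter.

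Next I would identify $\bar u$ as a solution of \eqref{eq_int_prob}. By \cref{thm_pmp_integrated}, $\bar u$ is a global minimizer over $\Utau\cap L^2(\Omega)$ of $v\mapsto\int_\Omega g(x,v(x))\dx$; then \cref{lem_g_nice} (applied with $p=2$) upgrades this to: $\bar u$ solves \eqref{eq_int_prob}. Hence \cref{lem_char_sol_tau} supplies $s\le0$ and a set $A$ with $\meas(A)=\tau$, $\tilde v$ integrable on $A$, such that \eqref{eq_optcon_support} and \eqref{eq_optcon_u_tildev} hold. From here the complementarity relation $s(\|\bar u\|_0-\tau)=0$ is exactly \cref{cor25}, and the inequality $|\bar u(x)|_0\cdot(\tilde v(x)-s)\le0$ is \cref{cor_comp_vtilde}, i.e.\ the last asserted property. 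For the implication ``$\bar u(x)\ne0\Rightarrow\bar u(x)$ minimizes $H(x,\bar y(x),\cdot,\bar\phi(x))$'': if $\bar u(x)\ne0$ then $x\in\supp\bar u\subset A$ by \eqref{eq_optcon_support}, so \eqref{eq_optcon_u_tildev} gives $g(x,\bar u(x))=g(x,0)+\tilde v(x)=\inf_{u\in\R}g(x,u)$.

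It remains to prove the pointwise maximum principle $\bar u(x)\in\argmin_{u\in\R}\big(H(x,\bar y(x),u,\bar\phi(x))+(-s)|u|_0\big)$ for almost all $x$; I read ``$\bar u(x)=\argmin$'' in the statement as ``$\bar u(x)$ is a minimizer'', since that argmin need not be a singleton. By \cref{cor_int_penalty}, $\bar u$ minimizes $\int_\Omega\big(g(x,u(x))+\lambda|u(x)|_0\big)\dx$ over all measurable $u$, from which one deduces by the standard pointwise decoupling for normal integrands — using the measurable selections of \cref{lem_un_vn} to exclude improvement on any set of positive measure — that $g(x,\bar u(x))+\lambda|\bar u(x)|_0=\inf_{u\in\R}\big(g(x,u)+\lambda|u|_0\big)$ a.e.; as $\lambda|\cdot|_0\ge0$ the infimum is then attained at $\bar u(x)$. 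Since this decoupling lemma is not spelled out in the paper, the cleaner route I would actually write is a direct case analysis using the characterization: for $x\in\Omega_{<s}$ one has $\bar u(x)\ne0$, $g(x,\bar u(x))=\inf_u g(x,u)$ by \eqref{eq_optcon_u_tildev}, and $\inf_{u\ne0}g(x,u)+\lambda=\inf_u g(x,u)+\lambda=g(x,0)+\tilde v(x)-s<g(x,0)$ because $\tilde v(x)<s$, so $\bar u(x)$ realizes the penalized infimum; for $x$ with $\tilde v(x)=s$ both $0$ and any minimizer of $g(x,\cdot)$ attain the value $g(x,0)=\inf_{u\ne0}g(x,u)+\lambda$, and $\bar u(x)$ is one of these by \eqref{eq_optcon_u_tildev} (it is either $0$ or lies in $\supp\bar u\subset A$); and for $x\notin\Omega_{\le s}$ one has $x\notin A\supset\supp\bar u$, hence $\bar u(x)=0$, which is the unique penalized minimizer there since $\inf_{u\ne0}g(x,u)+\lambda=g(x,0)+\tilde v(x)-s>g(x,0)$. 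The delicate point is precisely this last step, the passage from the integral penalized problem to a pointwise $\argmin$; everything else is bookkeeping over the results of \cref{sec3}.
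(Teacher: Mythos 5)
Your proposal is correct and follows essentially the same route as the paper: freeze the Hamiltonian into the integrand $g(x,u)=H(x,\bar y(x),u,\bar\phi(x))$, pass from \cref{thm_pmp_integrated} via \cref{lem_g_nice} to a solution of \eqref{eq_int_prob}, and then read off the claims from \cref{lem_char_sol_tau} and \cref{cor_comp_vtilde,cor25,cor_int_penalty}. The only difference is that you make explicit the passage from the integral penalized minimization of \cref{cor_int_penalty} to the pointwise $\argmin$, via the case analysis comparing $\tilde v(x)$ with $s$ using \eqref{eq_optcon_support} and \eqref{eq_optcon_u_tildev} --- a step the paper's proof leaves implicit --- and that case analysis is sound.
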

\begin{proof}
Let us define $g$ by
\[
	g(x,u):= H(x,\bar y(x), u, \bar \phi(x)).
\]
Then $g$ is a normal integrand, and $g(\cdot,0)$ is integrable. Due to \cref{thm_pmp_integrated},
$\bar u$ solves
\[
	\min_{u\in \Utau \cap L^2(\Omega)} \int_\Omega g(x,u(x))\dx.
\]
By \cref{lem_g_nice}, $\bar u$ solves \eqref{eq_int_prob}.
Hence, the results of \cref{sec3} are applicable.
Let $s\le0$ be as in \cref{lem_char_sol_tau}.
Then the claim follows with \cref{cor_comp_vtilde,cor25,cor_int_penalty}.
%
\end{proof}

This result shows that the conditions \eqref{eq_ode_pmp1} and  \eqref{eq_ode_pmp2}, which we derived for
an ODE control problem, are satisfied in adapted form in the PDE control problem.

\section{Proximal gradient algorithm}\label{sec5}

In this section, we will analyze a proximal gradient algorithm applied to a problem with $L^0$ constraints.
Here, we consider problems of the type
\begin{equation}\label{eq_prob_sec5}
	\min_{u\in \Utau \cap L^2(\Omega)} f(u) + \frac\alpha2 \|u\|_{L^2(\Omega)}^2 .
\end{equation}

We are going to use the following set of assumptions.

\begin{assumption}\label{ass_proxgrad}
\phantom{bla bla}
\begin{enumerate}
\item $\Omega\subset\R^d$ is Lebesgue measurable with $\meas(\Omega)\in (0,\infty)$, $\tau \in (0,\meas(\Omega))$.
 \item\label{ass_a2}
The function  $f:L^2(\Omega)\to \R$ is bounded from below and Fréchet differentiable.
In addition, $\nabla f:L^2(\Omega)\to L^2(\Omega)$ is Lipschitz continuous with constant $L_f$, i.e.,
\[
\|\nabla f(u_1)-\nabla f(u_2)\|_{L^2(\Omega)}\leq L_f\|u_1-u_2\|_{L^2(\Omega)}
\]
holds for all  $u_1,u_2\in L^2(\Omega)$.

\item $\alpha\ge0$.

\end{enumerate}
\end{assumption}

These requirements on $f$ are well-established in the context of first-order optimization methods.
The requirement of global Lipschitz continuity of $\nabla f$ and knowledge of the Lipschitz modulus $L_f$ can be overcome
by a suitable back-tracking method, see \cite[Section 3.3]{DWachsmuth2019}, which can be used in our situation as well.

\begin{remark}\label{rem_pde_problem_fits}
Under some restrictions, the problem of \cref{sec4} satisfies these assumptions.
Let us assume that $L$ is of the form $L(x,y,u)=L(x,y) + \frac\alpha2 u^2$.
Define $f(u):= \int_\Omega L(x,y_u(x))\dx$, where $y_u$ is the solution of \eqref{eq_nonlinear_state}.
If the nonlinearity in the equation is linear in $u$, e.g., $f(x,y,u) = f(x,y)+u$, then $f$ satisfies \cref{ass_proxgrad}.
See also the discussion in \cite[Section 2.2]{NatemeyerWachsmuth2021}.
\end{remark}

Let us first prove a necessary optimality condition for \eqref{eq_prob_sec5}. The proof is similar to \cref{thm_pmp_integrated} above.

\begin{theorem}\label{thm_noc_sec5}
Suppose $f$ is a Fréchet differentiable mapping from $L^1(\Omega)\to \R$.
Let $\bar u$ be a local solution of \eqref{eq_prob_sec5}.
Then it holds
\[
	\int_\Omega \nabla f(\bar u) \bar u \dx +\frac\alpha2 \|\bar u\|_{L^2(\Omega)}^2\le \int_\Omega f(\bar u) v \dx + \frac\alpha2 \|v\|_{L^2(\Omega)}^2
\]
for all $v\in L^2(\Omega)$ be with $\|v\|_0 \le \tau$.

In addition, there is a number $s\le0$ such that
\begin{equation}\label{eq_noc5_comp_tau}
	s (\|\bar u\|_0-\tau) =0.
\end{equation}
If $\alpha>0$ then for almost all $x\in \Omega$ the following conditions are fulfilled:
\begin{equation}\label{eq_noc5_nonzero_u}
\bar u(x)\ne0 \quad \Rightarrow \quad \bar u(x) = -\frac1\alpha \nabla f(\bar u)(x),
\end{equation}
\begin{equation}\label{eq_noc5_comp_u}
|\bar u(x)|_0 \cdot ( -\frac1{2\alpha} |\nabla f(\bar u)(x)|^2 - s) \le 0.
\end{equation}
If $\alpha =0$ then $\nabla f(\bar u)=0$.
\end{theorem}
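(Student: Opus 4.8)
The plan is to follow the two‑step pattern of \cref{thm_pmp_integrated} and \cref{thm_pmp_pointwise}: first derive the integral inequality by a feasibility‑preserving ``packing'' perturbation built from Lyapunov's theorem, then feed the resulting linear–quadratic problem into the machinery of \cref{sec3}.

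\textbf{Step 1: the integral inequality.} Fix $v\in L^2(\Omega)$ with $\|v\|_0\le\tau$ and apply \cref{lem_lyapunov} to a finite family of $L^1(\Omega)$‑functions that includes $|\bar u|_0$, $|v|_0$, $(v-\bar u)^2$, $\bar u^2$, $v^2$, $|v-\bar u|$ and $\nabla f(\bar u)(v-\bar u)$, obtaining for each $\rho\in(0,1)$ a measurable set $E_\rho\subset\Omega$ on which every one of these functions integrates to $\rho$ times its integral over $\Omega$. (Since $f$ enters only abstractly, neither the weak-$*$ property of \cref{lem_lyapunov} nor the analogue of \cref{cor_lyapunov} is needed here.) Put $u_\rho:=\bar u+\chi_{E_\rho}(v-\bar u)=(1-\chi_{E_\rho})\bar u+\chi_{E_\rho}v\in L^2(\Omega)$. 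As in the proof of \cref{thm_pmp_integrated}, the members $|\bar u|_0,|v|_0$ give $\|u_\rho\|_0=(1-\rho)\|\bar u\|_0+\rho\|v\|_0\le\tau$, so $u_\rho$ is feasible; the quadratic members give $\|u_\rho-\bar u\|_{L^2(\Omega)}^2=\rho\|v-\bar u\|_{L^2(\Omega)}^2$ and $\|u_\rho\|_{L^2(\Omega)}^2-\|\bar u\|_{L^2(\Omega)}^2=\rho(\|v\|_{L^2(\Omega)}^2-\|\bar u\|_{L^2(\Omega)}^2)$; the member $|v-\bar u|$ gives $\|u_\rho-\bar u\|_{L^1(\Omega)}=\rho\|v-\bar u\|_{L^1(\Omega)}\to0$; and $\nabla f(\bar u)(v-\bar u)$ gives $\int_\Omega\nabla f(\bar u)(u_\rho-\bar u)\dx=\rho\int_\Omega\nabla f(\bar u)(v-\bar u)\dx$. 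Here $\nabla f(\bar u)$ is the Fréchet derivative of $f$ at $\bar u$, which as an element of $L^1(\Omega)^*=L^\infty(\Omega)$ coincides with the $L^2$‑gradient of \cref{ass_proxgrad} by density of $L^2$ in $L^1$ and uniqueness of the derivative.

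\textbf{Step 1, continued, and Step 2.} Local optimality gives $f(\bar u)+\frac\alpha2\|\bar u\|_{L^2(\Omega)}^2\le f(u_\rho)+\frac\alpha2\|u_\rho\|_{L^2(\Omega)}^2$ for $\rho$ small. Using Fréchet differentiability of $f$ on $L^1(\Omega)$, $f(u_\rho)-f(\bar u)=\rho\int_\Omega\nabla f(\bar u)(v-\bar u)\dx+o(\|u_\rho-\bar u\|_{L^1(\Omega)})=\rho\int_\Omega\nabla f(\bar u)(v-\bar u)\dx+o(\rho)$; dividing the optimality inequality by $\rho$ and passing to the limit $\rho\searrow0$ yields $0\le\int_\Omega\nabla f(\bar u)(v-\bar u)\dx+\frac\alpha2(\|v\|_{L^2(\Omega)}^2-\|\bar u\|_{L^2(\Omega)}^2)$, which is the asserted integral inequality. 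Now set $g(x,u):=\nabla f(\bar u)(x)\,u+\frac\alpha2 u^2$; this is a Caratheodory function, hence a normal integrand, $g(\cdot,0)\equiv0$ is integrable, and $\tau\in(0,\meas(\Omega))$, so \cref{ass_int_g} holds. The integral inequality says precisely that $\bar u$ minimizes $\int_\Omega g(x,u(x))\dx$ over $\Utau\cap L^2(\Omega)$, so by \cref{lem_g_nice} (with $p=2$) $\bar u$ solves \eqref{eq_int_prob} and the results of \cref{sec3} apply. For this $g$, $\tilde v(x)=\inf_{w\in\R}\bigl(\nabla f(\bar u)(x)\,w+\frac\alpha2 w^2\bigr)$ equals $-\frac1{2\alpha}|\nabla f(\bar u)(x)|^2$ when $\alpha>0$ (integrable, since $\nabla f(\bar u)\in L^2(\Omega)$, with unique minimizer $w=-\frac1\alpha\nabla f(\bar u)(x)$), and equals $0$ where $\nabla f(\bar u)(x)=0$ and $-\infty$ elsewhere when $\alpha=0$.

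\textbf{Step 3: pointwise conditions.} Take $s\le0$ and $A\subset\Omega$ with $\meas(A)=\tau$, $\Omega_{<s}\subset\supp\bar u\subset A\subset\Omega_{\le s}$, $\tilde v$ integrable on $A$, and $g(x,\bar u(x))-g(x,0)=\tilde v(x)$ for a.e.\ $x\in A$, as furnished by \cref{lem_char_sol_tau}. Then \eqref{eq_noc5_comp_tau} is \cref{cor25}. If $\alpha=0$: integrability of $\tilde v\in\{0,-\infty\}$ on the set $A$ of positive measure forces $\meas(A\cap\{\nabla f(\bar u)\ne0\})=0$, but $\{\nabla f(\bar u)\ne0\}=\Omega_{<s}\subset A$ (whether $s<0$ or $s=0$), so $\nabla f(\bar u)=0$ a.e. If $\alpha>0$: \cref{cor_comp_vtilde} gives $|\bar u(x)|_0(\tilde v(x)-s)\le0$ for a.e.\ $x$, which is \eqref{eq_noc5_comp_u}; and for a.e.\ $x\in\supp\bar u\subset A$, \eqref{eq_optcon_u_tildev} reads $\nabla f(\bar u)(x)\bar u(x)+\frac\alpha2\bar u(x)^2=\tilde v(x)=\inf_w(\nabla f(\bar u)(x)w+\frac\alpha2 w^2)$, whence strict convexity forces $\bar u(x)=-\frac1\alpha\nabla f(\bar u)(x)$, which is \eqref{eq_noc5_nonzero_u}. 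The delicate point is Step 1: the feasibility‑preserving perturbation $u_\rho-\bar u=\chi_{E_\rho}(v-\bar u)$ has size only $O(\sqrt\rho)$ in $L^2(\Omega)$, so $L^2$‑differentiability of $f$ does not suffice to annihilate the Taylor remainder after division by $\rho$; one must exploit that the perturbation has size $O(\rho)$ in $L^1(\Omega)$, which is why $f$ is assumed Fréchet differentiable on $L^1(\Omega)$, and one must include enough functions in the Lyapunov family to evaluate every relevant integral exactly.
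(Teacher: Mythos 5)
Your proposal is correct and follows essentially the same route as the paper: a Lyapunov-based feasibility-preserving perturbation $u_\rho=\bar u+\chi_{E_\rho}(v-\bar u)$ exploiting $L^1$-differentiability to kill the remainder, followed by an application of the Section \ref{sec3} results to $g(x,u)=\nabla f(\bar u)(x)u+\frac\alpha2 u^2$ with $\tilde v=-\frac1{2\alpha}|\nabla f(\bar u)|^2$. You are somewhat more explicit than the paper (you correctly put $\nabla f(\bar u)(v-\bar u)$ rather than just $\nabla f(\bar u)$ into the Lyapunov family, and you actually supply the $\alpha=0$ argument, which the paper's proof omits), but these are refinements, not a different method.
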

\begin{proof}
Let us set $F(u):=f(u) + \frac\alpha2 \|u\|_{L^2(\Omega)}^2$, which is Fréchet differentiable on $L^2(\Omega)$ with gradient $\nabla F(u)=\nabla f(u)+\alpha u$.
Let $v\in L^2(\Omega)$ with $\|v\|_0 \le \tau$.
Set $m:=5$, and
 \[
  (g_1, \dots, g_m):=((v-\bar u)^2,\, |\bar u|_0,\, |v|_0,\,   \nabla F(\bar u), \ |v-\bar u|).
 \]
Then by \cref{lem_lyapunov}, for each $\rho>0$ there is a set $E_\rho$ such that $\int_{E_\rho} g_j \dx = \rho\int_\Omega g_j\dx$ for all $j=1\dots m$.
As in the proof of \cref{thm_pmp_integrated}, the function
$u_\rho := \bar u + \chi_{E_\rho}(v-\bar u)$ satisfies
$ \|u_\rho\|_0 \le \tau$
and
$ \|u_\rho -\bar u\|_{L^2(\Omega)}^2 =  \rho \|v-\bar u\|_{L^2(\Omega)}^2$.
Due to Fréchet differentiability and the construction of $E_\rho$ and $u_\rho$, we have
\[\begin{split}
 F(u_\rho) - F(\bar u) &= \nabla F(\bar u)(u_\rho-\bar u) + o(\|u_\rho -\bar u\|_{L^1(\Omega)}) + \frac\alpha2 \|u_\rho -\bar u\|_{L^2(\Omega)}^2\\
 &= \rho  \nabla F(\bar u)(v-\bar u) + o(\rho) + \rho \frac\alpha2 \|v -\bar u\|_{L^2(\Omega)}^2.
 \end{split}
\]
Dividing by $\rho>0$ and
passing to the limit $\rho\searrow0$, implies by local optimality
\[\begin{split}
	0&\le \nabla F(\bar u)(v-\bar u) +  \frac\alpha2 \|v -\bar u\|_{L^2(\Omega)}^2 \\
	&= \nabla f(\bar u)(v-\bar u) + \frac\alpha2 \|v\|_{L^2(\Omega)}^2 - \frac\alpha2 \|\bar u\|_{L^2(\Omega)}^2,
\end{split}\]
which proves the first claim. The second claim follows from \cref{lem_char_sol_tau} and \cref{cor_comp_vtilde,cor25}
with
$\tilde v = -\frac1{2\alpha} |\nabla f(\bar u)|^2$.
\end{proof}

Let us briefly give the motivation of the proximal gradient algorithm.
The well-known steepest descent method applied to the unconstrained differentiable problem $\min_u f(u)$
amounts to the iteration
\begin{equation}\label{eq_steepest_d}
	u_{k+1} = u_k - t_k \nabla f(u_k),
\end{equation}
where $t_k>0$ is a suitable step-size. It is immediate that $u_{k+1}$ is a solution of the unconstrained problem
\begin{equation}\label{eq_proxgrad_motiv}
	\min_u f(u_k)+\nabla f(u_k)\cdot(u-u_k) + \frac{1}{2t_k}\|u-u_k\|^2_{L^2(\Omega)}.
\end{equation}
While it is impossible to add the constraint $\|u\|_0\le \tau$ to the iteration procedure \eqref{eq_steepest_d},
this constraint can be easily imposed on the problem \eqref{eq_proxgrad_motiv}.
The resulting proximal gradient (or forward-backward) algorithm  reads as follows.
Here, we replaced the parameter $t_k$ by a fixed parameter $L$, which takes the place of $\frac1{t_k}$.

\begin{algorithm}[Proximal gradient algorithm] \label{alg_proxgrad} Choose $L>0$ and $u_0\in L^2(\Omega)$. Set $k=0$.
	\begin{enumerate}
		\item Compute $u_{k+1}$ as solution of
		\begin{equation}\label{eq_proxgrad_sub}
			\min_{u\in \Utau \cap L^2(\Omega)} f(u_k)+\nabla f(u_k)\cdot(u-u_k) + \frac{L}{2}\|u-u_k\|^2_{L^2(\Omega)} + \frac\alpha2 \|u\|_{L^2(\Omega)}^2
		\end{equation}
		\item Set $k:=k+1$, go to step 1.
\end{enumerate} \end{algorithm}

The functional to be minimized in \eqref{eq_proxgrad_sub} can be written as an integral functional $\int_\Omega g(x,u(x))\dx$ with
$g$ defined by
\[
g(x,u) =  f(u_k) + \nabla f(u_k)(x)\cdot(u-u_k(x)) + \frac{L}{2}(u-u_k(x))^2 + \frac\alpha2 u^2.
\]
The pointwise minimum of $g$ is realized by the function $\tilde u\in L^2(\Omega)$ defined by
\[
	\tilde u(x):= \frac{Lu_k(x) - \nabla f(u_k)(x) }{L+\alpha}.
\]
Clearly, $g(\cdot,0)$ is integrable, \cref{ass_int_g} is satisfied, and
the results of \cref{sec3} are applicable.
Hence, a solution of \eqref{eq_proxgrad_sub} can be computed as in \cref{lem_char_sol_tau}.
Here, $L>0$ is important: note that integrability of $g(\cdot,u)$ implies $u\in L^2(\Omega)$.
It is easy to verify that
\[
	 g(x,\tilde u(x))-g(x,0) = -\frac1{2(L+\alpha)} \left(Lu_k(x) - \nabla f(u_k)(x) \right)^2,
\]
which corresponds to $\tilde v$ in \cref{lem_char_sol_tau}.

\begin{lemma}\label{lem_sol_proxgrad}
Let \cref{ass_proxgrad} be satisfied.
	Let $L>0$ and $u_k\in L^2(\Omega)$ be given.
	Then \eqref{eq_proxgrad_sub} is solvable. In addition, there is $\lambda_{k+1}\ge0$ such that for every solution $u_{k+1}$ of \eqref{eq_proxgrad_sub} it holds
	\[
		\lambda_{k+1}( \|u_{k+1}\|_0- \tau) =0,
	\]
    and $u_{k+1}$ solves
    \[
    	\min_{u\in L^2(\Omega)} f(u_k)+\nabla f(u_k)\cdot(u-u_k) + \frac{L}{2}\|u-u_k\|^2_{L^2(\Omega)} + \frac\alpha2 \|u\|_{L^2(\Omega)}^2 + \lambda_{k+1}\|u\|_0.
    \]
    Moreover, for almost all $x\in\Omega$ we have
    \begin{equation}\label{eq_iterates_away_zero}
    	u_{k+1}(x) \ne 0 \quad \Rightarrow \quad |u_{k+1}(x)| \ge \sqrt{ \frac{2 \lambda_{k+1}}{L+\alpha} }
    \end{equation}
    and
    \begin{equation}\label{eq_comp_vtilde_iterates}
    	|u_{k+1}(x)|_0 \cdot \left(-\frac1{2(L+\alpha)} \left(Lu_k(x) - \nabla f(u_k)(x) \right)^2  + \lambda_{k+1}\right) \le 0.
    \end{equation}
\end{lemma}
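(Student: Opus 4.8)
The plan is to apply the machinery of Section~\ref{sec3} directly to the subproblem \eqref{eq_proxgrad_sub}, identifying it with problem \eqref{eq_int_prob} for the integrand $g$ written above. First I would verify that \cref{ass_int_g} holds for this $g$: it is a Carath\'eodory function (hence a normal integrand), $g(\cdot,0)$ is integrable since $\nabla f(u_k), u_k \in L^2(\Omega)$, and $\tau\in(0,\meas(\Omega))$ by \cref{ass_proxgrad}. One subtlety to address explicitly is that the minimization in \eqref{eq_int_prob} is over all measurable $u$ with $g(\cdot,u)$ integrable, whereas \eqref{eq_proxgrad_sub} minimizes over $\Utau\cap L^2(\Omega)$; but since $L>0$ the quadratic term forces $g(\cdot,u)$ integrable $\iff$ $u\in L^2(\Omega)$ (on the support of $u$, and $u=0$ elsewhere), so the two feasible sets coincide on the relevant part and the problems are equivalent. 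Solvability of \eqref{eq_proxgrad_sub} then follows from \cref{lem_char_sol_tau}: take $s\le0$ the threshold associated to $\tau$ via the distribution functions of $\tilde v$, and use the Sierpi\'nski-theorem argument to produce a feasible support of measure exactly $\tau$ realizing the pointwise minimum $\tilde u$ inside it.

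Next I would set $\lambda_{k+1}:=-s\ge0$, where $s$ is the number from \cref{lem_char_sol_tau} associated with this particular subproblem. The complementarity $\lambda_{k+1}(\|u_{k+1}\|_0-\tau)=0$ is then exactly \cref{cor25} applied to $g$. The statement that $u_{k+1}$ solves the $\ell^0$-penalized problem $\min_u \int_\Omega g(x,u(x)) + \lambda_{k+1}|u(x)|_0\dx$ is the first assertion of \cref{cor_int_penalty}; I should note that this penalized integral functional, after adding back the constant $\int_\Omega g(x,0)\dx$, is precisely the objective displayed in the lemma, so no further work is needed there.

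For the two pointwise estimates I would use the explicit formula $\tilde v(x) = -\tfrac1{2(L+\alpha)}(Lu_k(x)-\nabla f(u_k)(x))^2$ already computed in the text. Inequality \eqref{eq_comp_vtilde_iterates} is immediate from \cref{cor_comp_vtilde}, which gives $|u_{k+1}(x)|_0\cdot(\tilde v(x)-s)\le0$ for a.e.\ $x$; substituting $s=-\lambda_{k+1}$ and the formula for $\tilde v$ yields exactly the claimed inequality. For \eqref{eq_iterates_away_zero}: if $u_{k+1}(x)\ne0$, then by \eqref{eq_optcon_support} of \cref{lem_char_sol_tau} we have $\tilde v(x)\le s=-\lambda_{k+1}$, i.e.\ $(Lu_k(x)-\nabla f(u_k)(x))^2\ge 2\lambda_{k+1}(L+\alpha)$; combining this with $u_{k+1}(x)=\tilde u(x)=\frac{Lu_k(x)-\nabla f(u_k)(x)}{L+\alpha}$ (valid a.e.\ on $\supp u_{k+1}$ by \eqref{eq_optcon_u_tildev}, since $g(x,\cdot)$ attains its minimum only at $\tilde u(x)$) gives $|u_{k+1}(x)| = \frac{|Lu_k(x)-\nabla f(u_k)(x)|}{L+\alpha} \ge \frac{\sqrt{2\lambda_{k+1}(L+\alpha)}}{L+\alpha} = \sqrt{\tfrac{2\lambda_{k+1}}{L+\alpha}}$.

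The one genuinely delicate point — the main obstacle — is the claim that $\lambda_{k+1}$ can be chosen \emph{uniformly over all solutions} $u_{k+1}$ of \eqref{eq_proxgrad_sub}. This works because in \cref{lem_char_sol_tau} the threshold $s$ is determined solely by $\tau$ and the distribution function of $\tilde v$, and $\tilde v$ depends only on the data $(u_k,\nabla f(u_k),L,\alpha)$, not on the particular minimizer chosen; hence the same $s$, and the same support sandwich $\Omega_{<s}\subset\supp u_{k+1}\subset\Omega_{\le s}$, applies to every solution. I would state this explicitly to justify the quantifier order ``there is $\lambda_{k+1}\ge0$ such that for every solution $u_{k+1}$''.
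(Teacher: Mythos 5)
Your proposal is correct and follows essentially the same route as the paper: identify the subproblem \eqref{eq_proxgrad_sub} with \eqref{eq_int_prob} for the quadratic integrand $g$, apply \cref{lem_char_sol_tau}, set $\lambda_{k+1}:=-s$, and read off the claims from \cref{cor25}, \cref{cor_int_penalty} and \cref{cor_comp_vtilde}. The only notable differences are that you derive \eqref{eq_iterates_away_zero} directly from the strict convexity of $u\mapsto g(x,u)$ and the inclusion $\supp u_{k+1}\subset\Omega_{\le s}$ (the paper instead cites \cite[Corollary 3.9]{DWachsmuth2019}), and that you make explicit the correct --- and worth stating --- observation that $s$ depends only on $(u_k,\nabla f(u_k),L,\alpha,\tau)$ and is therefore uniform over all solutions, a point the paper leaves implicit.
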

\begin{proof}
	Existence of solutions follows from \cref{lem_char_sol_tau}.
	The properties of $\lambda_{k+1}:=-s$, where $s$ is as in  \cref{lem_char_sol_tau}, are consequences of \cref{cor25} and \cref{cor_int_penalty}.
	The claim \eqref{eq_iterates_away_zero} is a consequence of \cref{cor_int_penalty} and \cite[Corollary 3.9]{DWachsmuth2019}.
	Finally, \cref{cor_comp_vtilde} implies \eqref{eq_comp_vtilde_iterates}.
\end{proof}

The iterates of the algorithm satisfy the following properties.

\begin{theorem}\label{thm_proxgrad_basic}
Let \cref{ass_proxgrad} be satisfied. Suppose $L>L_f$.
 Let $(u_k)$ be a sequence of iterates generated by \cref{alg_proxgrad}.
 Then it holds that:
 \begin{enumerate}
  \item The sequences $(u_k)$ and $(\nabla f(u_k))$ are bounded in $L^2(\Omega)$ if $\alpha>0$.
  \item The sequence $(f(u_k) + \frac\alpha2\|u_k\|_{L^2(\Omega)})$ is monotonically decreasing and converging.
  \item $\sum_{k=0}^\infty \|u_{k+1}-u_k\|_{L^2(\Omega)}^2 <\infty$.
 \end{enumerate}
\end{theorem}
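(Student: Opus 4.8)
The plan is to derive the standard ``sufficient decrease'' estimate of proximal gradient methods and to read off all three statements from it. Write $F(u):=f(u)+\frac\alpha2\|u\|_{L^2(\Omega)}^2$. For $k\ge1$ the iterate $u_k$ is the output of step~1 at iteration $k-1$, hence $u_k\in\Utau\cap L^2(\Omega)$ is feasible for the subproblem \eqref{eq_proxgrad_sub} solved at iteration $k$. Comparing the value of the subproblem objective at its minimizer $u_{k+1}$ with its value $F(u_k)$ at $u_k$ (the linear and quadratic-difference terms vanish at $u=u_k$) gives
\[
\nabla f(u_k)\cdot(u_{k+1}-u_k)+\frac L2\|u_{k+1}-u_k\|_{L^2(\Omega)}^2+\frac\alpha2\|u_{k+1}\|_{L^2(\Omega)}^2\le\frac\alpha2\|u_k\|_{L^2(\Omega)}^2 .
\]
On the other hand, the classical descent lemma, which follows from the $L_f$-Lipschitz continuity of $\nabla f$ on $L^2(\Omega)$ by integration along the segment from $u_k$ to $u_{k+1}$, gives
\[
f(u_{k+1})\le f(u_k)+\nabla f(u_k)\cdot(u_{k+1}-u_k)+\frac{L_f}2\|u_{k+1}-u_k\|_{L^2(\Omega)}^2 .
\]
Adding these two inequalities and cancelling the linear term yields the key estimate
\[
F(u_{k+1})\le F(u_k)-\frac{L-L_f}2\|u_{k+1}-u_k\|_{L^2(\Omega)}^2 ,\qquad k\ge1 ,
\]
in which $L-L_f>0$ by assumption.

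From here the three claims follow quickly. \textbf{Claim 2:} the key estimate shows $(F(u_k))_{k\ge1}$ is nonincreasing, and it is bounded from below because $f$ is bounded from below (\cref{ass_proxgrad}) and $\frac\alpha2\|\cdot\|_{L^2(\Omega)}^2\ge0$; a nonincreasing, bounded-below real sequence converges. \textbf{Claim 3:} summing the key estimate over $k=1,\dots,N$ telescopes the $F$-terms, so
\[
\frac{L-L_f}2\sum_{k=1}^N\|u_{k+1}-u_k\|_{L^2(\Omega)}^2\le F(u_1)-\inf_{L^2(\Omega)}F<\infty ;
\]
letting $N\to\infty$ and adding the single finite term $\|u_1-u_0\|_{L^2(\Omega)}^2$ gives $\sum_{k=0}^\infty\|u_{k+1}-u_k\|_{L^2(\Omega)}^2<\infty$. \textbf{Claim 1:} when $\alpha>0$, the bound $F(u_k)\le F(u_1)$ combined with $f\ge\inf_{L^2(\Omega)}f$ gives $\frac\alpha2\|u_k\|_{L^2(\Omega)}^2\le F(u_1)-\inf_{L^2(\Omega)}f$, so $(u_k)$ is bounded in $L^2(\Omega)$; then $\|\nabla f(u_k)\|_{L^2(\Omega)}\le\|\nabla f(u_0)\|_{L^2(\Omega)}+L_f\|u_k-u_0\|_{L^2(\Omega)}$ shows $(\nabla f(u_k))$ is bounded as well.

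I do not expect a genuine obstacle here; the one point requiring care is purely a matter of indexing. The initial iterate $u_0$ is only assumed to lie in $L^2(\Omega)$ and need not satisfy $\|u_0\|_0\le\tau$, so the comparison argument --- and hence the monotone decrease of $F$ --- is guaranteed only from $k\ge1$ on; if desired one simply takes $u_0\in\Utau$ (for instance $u_0=0$), and then the key estimate and the monotonicity hold for all $k\ge0$. The remaining ingredients --- the descent lemma and the feasibility of $u_k$ for the $k$-th subproblem when $k\ge1$ --- are standard, the latter being built into \cref{alg_proxgrad}.
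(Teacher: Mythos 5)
Your proof is correct and is precisely the standard sufficient-decrease argument for proximal gradient methods (feasibility of $u_k$ for the $k$-th subproblem plus the descent lemma, giving $F(u_{k+1})\le F(u_k)-\frac{L-L_f}{2}\|u_{k+1}-u_k\|_{L^2(\Omega)}^2$), which is exactly what the paper invokes by citing \cite[Theorem 3.13]{DWachsmuth2019} without reproducing the details. Your remark about the indexing --- that the estimate is only guaranteed from $k\ge1$ unless $u_0\in\Utau$ --- is a legitimate point of care that the paper glosses over.
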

\begin{proof}
These claims can be proven as in \cite[Theorem 3.13]{DWachsmuth2019}.
\end{proof}

Let us define the following sequence
\[
	\chi_k(x):= |u_k(x)|_0.
\]
Using \cref{eq_iterates_away_zero}, we have the following estimate of $(\chi_k)$,
which is similar to \cite[Lemma 3.12]{DWachsmuth2019}.

\begin{lemma}
Let $(u_k)$ be iterates of \cref{alg_proxgrad}. Then it holds
\[
	\|u_{k+1}-u_k\|_{L^2(\Omega)}^2 \ge  \frac{2 \min(\lambda_k,\lambda_{k+1})}{L+\alpha}  \|\chi_{k+1}-\chi_k\|_{L^1(\Omega)}.
\]
\end{lemma}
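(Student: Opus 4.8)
The plan is to compare, pointwise in $x$, the value of $|u_{k+1}(x) - u_k(x)|^2$ with $|\chi_{k+1}(x) - \chi_k(x)|$, and then integrate. The quantity $\chi_{k+1}(x) - \chi_k(x)$ can only take the values $0$, $1$, or $-1$, and the right-hand side contributes only on the set where it equals $\pm 1$, i.e. where exactly one of $u_k(x)$, $u_{k+1}(x)$ is nonzero. So I would split $\Omega$ into the set $\Omega_1$ where $u_k(x) \ne 0 = u_{k+1}(x)$, the set $\Omega_2$ where $u_k(x) = 0 \ne u_{k+1}(x)$, and the remainder (where $\chi_{k+1} - \chi_k$ vanishes and there is nothing to prove).

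On $\Omega_2$ we have $u_k(x) = 0$ and $u_{k+1}(x) \ne 0$, so by \eqref{eq_iterates_away_zero} applied at step $k+1$, $|u_{k+1}(x) - u_k(x)|^2 = |u_{k+1}(x)|^2 \ge \frac{2\lambda_{k+1}}{L+\alpha} \ge \frac{2\min(\lambda_k,\lambda_{k+1})}{L+\alpha} = \frac{2\min(\lambda_k,\lambda_{k+1})}{L+\alpha}\,|\chi_{k+1}(x) - \chi_k(x)|$, since $|\chi_{k+1}(x) - \chi_k(x)| = 1$ there. On $\Omega_1$ we have $u_{k+1}(x) = 0$ and $u_k(x) \ne 0$; here I need the analogue of \eqref{eq_iterates_away_zero} for the $k$-th iterate, namely $|u_k(x)| \ge \sqrt{2\lambda_k/(L+\alpha)}$, which holds for $k \ge 1$ by \cref{lem_sol_proxgrad} applied at the previous step (for $k=0$ one can take $\lambda_0 := 0$, or note that the statement is vacuous). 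Then $|u_{k+1}(x) - u_k(x)|^2 = |u_k(x)|^2 \ge \frac{2\lambda_k}{L+\alpha} \ge \frac{2\min(\lambda_k,\lambda_{k+1})}{L+\alpha}\,|\chi_{k+1}(x) - \chi_k(x)|$. Adding the two contributions and integrating over $\Omega$ (the integrand being nonnegative everywhere and dominated below on $\Omega_1 \cup \Omega_2$) yields
\[
\|u_{k+1}-u_k\|_{L^2(\Omega)}^2 \ge \int_{\Omega_1 \cup \Omega_2} |u_{k+1}-u_k|^2 \dx \ge \frac{2\min(\lambda_k,\lambda_{k+1})}{L+\alpha}\int_\Omega |\chi_{k+1}-\chi_k|\dx,
\]
which is the assertion.

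The only genuine subtlety is the bookkeeping at $k=0$: \eqref{eq_iterates_away_zero} is stated for $u_{k+1}$ in terms of $\lambda_{k+1}$, so the lower bound on $|u_k|$ that I use on $\Omega_1$ is really \eqref{eq_iterates_away_zero} at the previous index and thus requires $k\ge 1$ (and the convention $\lambda_0 := 0$, consistent with $u_1$ being the first genuine iterate). This is not an obstacle, merely something to state explicitly; everything else is the elementary case split above.
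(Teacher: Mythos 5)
Your proof is correct and follows essentially the same pointwise case distinction as the paper's own argument: on the set where $\chi_{k+1}\ne\chi_k$ exactly one of the two iterates vanishes, and \eqref{eq_iterates_away_zero} is applied at step $k+1$ or at step $k$ accordingly before integrating. Your explicit bookkeeping for $k=0$ (where the lower bound on $|u_k|$ comes from the previous iteration and one must set $\lambda_0:=0$ or start at $k\ge1$) is a detail the paper leaves implicit, but it does not alter the argument.
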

\begin{proof}
Let $x\in \Omega$ such that $\chi_{k+1}(x)\ne \chi_k(x)$.
Then $|\chi_{k+1}(x)- \chi_k(x)|=1$, and exactly one of $u_{k+1}(x)$ and $u_k(x)$ is zero.
Suppose $u_{k+1}(x)=0$ and $u_k(x)\ne0$. Then $|u_{k+1}(x)-u_k(x)|=|u_k(x)|\ge \sqrt{ \frac{2 \lambda_k}{L+\alpha} }$.
If $u_{k+1}(x)\ne0$ and $u_k(x)=0$ then $|u_{k+1}(x)|\ge \sqrt{ \frac{2 \lambda_{k+1}}{L+\alpha} }$.
And the claim is proven.
\end{proof}

Under the assumption that $(\lambda_k)$ is bounded from below by a positive number, we can
prove feasibility of weak limit points of the algorithm.
In the general situation, it is not clear how to prove such a result as the map $u\mapsto \|u\|_0$ is not
weakly sequentially lower semi-continuous from $L^2(\Omega) \to \R$.

\begin{theorem}
Let \cref{ass_proxgrad} be satisfied. Suppose $L>L_f$.
 Let $(u_k)$ be a sequence of iterates generated by \cref{alg_proxgrad}.
Suppose
\[
	\liminf_{k\to\infty} \lambda_k >0,
\]
where $(\lambda_k)$ is as in \cref{lem_sol_proxgrad}.

Then $\chi_k\to \bar\chi$ in $L^1(\Omega)$, and
every weak sequential limit point $\bar u$ of $(u_k)$ is feasible for the $L^0$ constraint, i.e. $\|\bar u\|_0\le \tau$.
\end{theorem}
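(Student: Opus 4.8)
The plan is to deduce from the hypothesis $\liminf_{k\to\infty}\lambda_k>0$ that the indicator sequence $\chi_k=|u_k|_0$ converges \emph{strongly} in $L^1(\Omega)$, and then to use this strong convergence to locate the weak sequential limit points of $(u_k)$. Strong $L^1$-convergence of $(\chi_k)$ will pin down a limiting support set $\bar S$ with $\meas(\bar S)\le\tau$, and a short algebraic identity will force any weak limit $\bar u$ to vanish off $\bar S$.

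First I would combine two facts. The third claim of \cref{thm_proxgrad_basic} gives $\sum_{k\ge0}\|u_{k+1}-u_k\|_{L^2(\Omega)}^2<\infty$, and the lemma preceding this theorem gives $\|\chi_{k+1}-\chi_k\|_{L^1(\Omega)}\le\frac{L+\alpha}{2\min(\lambda_k,\lambda_{k+1})}\|u_{k+1}-u_k\|_{L^2(\Omega)}^2$. Since $\liminf_k\lambda_k>0$, there are $\underline\lambda>0$ and $K\in\N$ with $\lambda_k\ge\underline\lambda$ for all $k\ge K$, so $\sum_{k\ge K}\|\chi_{k+1}-\chi_k\|_{L^1(\Omega)}\le\frac{L+\alpha}{2\underline\lambda}\sum_{k\ge K}\|u_{k+1}-u_k\|_{L^2(\Omega)}^2<\infty$. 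Hence $(\chi_k)$ is Cauchy in $L^1(\Omega)$ and converges to some $\bar\chi$. Passing to a pointwise a.e.\ convergent subsequence and using $\chi_k(x)\in\{0,1\}$ shows $\bar\chi=\chi_{\bar S}$ a.e., where $\bar S:=\{\bar\chi=1\}$; and since $\int_\Omega\chi_k\dx=\|u_k\|_0\le\tau$ for $k\ge1$, $L^1$-convergence yields $\meas(\bar S)=\int_\Omega\bar\chi\dx\le\tau$.

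Next, for any subsequence with $u_{k_j}\rightharpoonup\bar u$ in $L^2(\Omega)$, I would show $\bar u=0$ a.e.\ on $\Omega\setminus\bar S$. The key observation is that $u_{k_j}$ vanishes a.e.\ on $\{\chi_{k_j}=0\}$, hence $\chi_{k_j}u_{k_j}=u_{k_j}$ and therefore $(1-\chi_{\bar S})\,u_{k_j}=(\chi_{k_j}-\chi_{\bar S})\,u_{k_j}$. Because $\chi_{k_j}-\chi_{\bar S}$ is $\{-1,0,1\}$-valued, $\|\chi_{k_j}-\chi_{\bar S}\|_{L^2(\Omega)}^2=\|\chi_{k_j}-\chi_{\bar S}\|_{L^1(\Omega)}\to0$, so by Cauchy--Schwarz and boundedness of the weakly convergent sequence $(u_{k_j})$ in $L^2(\Omega)$ one gets $\|(1-\chi_{\bar S})u_{k_j}\|_{L^1(\Omega)}\le\|u_{k_j}\|_{L^2(\Omega)}\|\chi_{k_j}-\chi_{\bar S}\|_{L^2(\Omega)}\to0$. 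Testing $u_{k_j}\rightharpoonup\bar u$ against the bounded function $\operatorname{sign}(\bar u)\,\chi_{\Omega\setminus\bar S}\in L^2(\Omega)$ (finite here since $\meas(\Omega)<\infty$) and comparing with this bound gives $\int_{\Omega\setminus\bar S}|\bar u|\dx=\lim_j\int_\Omega u_{k_j}\operatorname{sign}(\bar u)\chi_{\Omega\setminus\bar S}\dx=0$. Thus $\supp\bar u\subset\bar S$ up to a null set and $\|\bar u\|_0\le\meas(\bar S)\le\tau$.

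The main obstacle is conceptual: $u\mapsto\|u\|_0$ is not weakly sequentially lower semicontinuous on $L^2(\Omega)$, so one cannot simply pass to the limit in $\|u_k\|_0\le\tau$. The whole argument hinges on upgrading the $\ell^2$-summability of the increments $\|u_{k+1}-u_k\|_{L^2(\Omega)}^2$ to $\ell^1$-summability of the indicator increments $\|\chi_{k+1}-\chi_k\|_{L^1(\Omega)}$ — which is precisely where $\liminf_k\lambda_k>0$ is used — and on the identity $(1-\chi_{\bar S})u_{k_j}=(\chi_{k_j}-\chi_{\bar S})u_{k_j}$, which converts the resulting strong $L^1$-convergence of the indicators into control of the weak limit. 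I would also note that only boundedness of the weakly convergent subsequence (automatic) is used in the last step, so global $L^2$-boundedness of $(u_k)$ is not needed and the case $\alpha=0$ is covered as well.
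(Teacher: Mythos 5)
Your proposal is correct and follows essentially the same route as the paper: the lower bound $\liminf_k\lambda_k>0$ upgrades the square-summability of $\|u_{k+1}-u_k\|_{L^2(\Omega)}$ to summability of $\|\chi_{k+1}-\chi_k\|_{L^1(\Omega)}$, giving $\chi_k\to\bar\chi$ in $L^1(\Omega)$, and then the identity $u_k=\chi_k u_k$ combined with strong convergence of the indicators and weak convergence of the subsequence forces $(1-\bar\chi)\bar u=0$ a.e., whence $\|\bar u\|_0\le\|\bar\chi\|_{L^1(\Omega)}\le\tau$. The only (harmless) difference is that you test against the single function $\operatorname{sign}(\bar u)\chi_{\Omega\setminus\bar S}$ with a Cauchy--Schwarz estimate, whereas the paper passes to the limit in $\int_\Omega(1-\chi_{k_n})u_{k_n}\phi\dx=0$ for arbitrary $\phi\in L^\infty(\Omega)$.
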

\begin{proof}
	Let $\lambda:=\liminf_{k\to\infty} \lambda_k>0$. Then for all $k$ sufficiently large, we have
\[
	\|u_{k+1}-u_k\|_{L^2(\Omega)}^2 \ge  \frac{\lambda}{L+\alpha}  \|\chi_{k+1}-\chi_k\|_{L^1(\Omega)}.
\]
The summability of $\|u_{k+1}-u_k\|_{L^2(\Omega)}^2$ implies those of $\|\chi_{k+1}-\chi_k\|_{L^1(\Omega)}$.
Hence $(\chi_k)$ is a Cauchy sequence in $L^1(\Omega)$, $\chi_k\to \bar\chi$ in $L^1(\Omega)$, and $\bar\chi$ is a characteristic function.
As $(\chi_k)$ is trivially bounded in $L^\infty(\Omega)$, it follows $\chi_k\to \bar\chi$ in $L^p(\Omega)$ for all $p<\infty$.

Let now $(u_{k_n})$ be a subsequence with $u_{k_n} \rightharpoonup \bar u$ in $L^2(\Omega)$.
Let $\phi\in L^\infty(\Omega)$. Since $\chi_k(x):= |u_k(x)|_0$, we have $\int_\Omega (1-\chi_{k_n})u_{k_n} \phi\dx=0$ for all $n$.
Passing to the limit in this
equation yields $\int_\Omega (1-\bar \chi) \bar u \phi\dx=0$. Since $\phi\in L^\infty(\Omega)$ was arbitrary, this implies $ (1-\bar \chi) \bar u=0$ almost everywhere,
which in turn implies $|\bar u|_0 \le \bar\chi$ almost everywhere,
as both functions $\bar\chi$ and $|u|_0$ only attain the values $0$ and $1$.
And it follows
\[
	\|\bar u\|_0 \le \|\bar \chi\|_{L^1(\Omega)} = \lim_{k\to\infty} \|\chi_k\|_{L^1(\Omega)}  = \lim_{k\to\infty} \|u_k\|_0 \le \tau,
\]
and $\bar u$ is feasible for the $L^0$ constraint.
\end{proof}

Moreover, we can prove strong convergence under additional assumptions on $\nabla f$.
See also the related result \cite[Theorem 3.18]{DWachsmuth2019}.
Here, we assume that $\nabla f$ maps weakly to strongly converging sequences.

\begin{theorem}\label{thm_strong_conv_iterates}
Let \cref{ass_proxgrad} be satisfied. Suppose $L>L_f$.
Let us assume complete continuity of $\nabla f$ from $L^2(\Omega)$ to $L^2(\Omega)$,
i.e., for all sequences $(v_k)$ in $L^2(\Omega)$ the following implication
\be\label{eqcompletecont}
 v_k \rightharpoonup v \text{ in } L^2(\Omega) \ \Rightarrow \ \nabla f(v_k) \to \nabla f(v)  \text{ in } L^2(\Omega)
\ee
holds.
In addition, we require $\alpha>0$.

Let $(u_k)$ be a sequence of iterates generated by \cref{alg_proxgrad}.
Suppose
\begin{equation}\label{eq_ass_lambdak}
	\liminf_{k\to\infty} \lambda_k >0,
\end{equation}
where $(\lambda_k)$ is as in \cref{lem_sol_proxgrad}.

Then $u_{k_n}\rightharpoonup \bar u$ in $L^2(\Omega)$ implies $u_{k_n}\to \bar u$ in $L^2(\Omega)$.
In addition, for almost all $x\in \Omega$ the following condition is fulfilled
\[
\bar u(x)\ne0 \quad \Rightarrow \quad \bar u(x) = -\frac1\alpha \nabla f(\bar u)(x).
\]
\end{theorem}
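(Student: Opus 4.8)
The plan is to exploit the explicit structure of the subproblem solutions. Applying \cref{lem_char_sol_tau} to the subproblem \eqref{eq_proxgrad_sub}, whose integrand $g(x,\cdot)$ is a strictly convex quadratic with unique pointwise minimizer $\tilde u_k(x):=\frac{Lu_k(x)-\nabla f(u_k)(x)}{L+\alpha}$, one has $u_{k+1}(x)=\tilde u_k(x)$ for a.a.\ $x\in A$ (strict convexity forces the minimizer to be attained only at $\tilde u_k(x)$) and $u_{k+1}(x)=0$ off $\supp u_{k+1}$, hence
\[
 u_{k+1}=\chi_{k+1}\,\tilde u_k=\chi_{k+1}\,\frac{Lu_k-\nabla f(u_k)}{L+\alpha}\quad\text{a.e.\ in }\Omega.
\]
From \cref{thm_proxgrad_basic} the sequences $(u_k)$, $(\nabla f(u_k))$, $(\tilde u_k)$ are bounded in $L^2(\Omega)$ and $\|u_{k+1}-u_k\|_{L^2(\Omega)}\to0$; from the foregoing theorem, the hypothesis \eqref{eq_ass_lambdak} gives $\chi_k\to\bar\chi$ in $L^p(\Omega)$ for every $p<\infty$ with $|\bar u|_0\le\bar\chi$.

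Next I would pass to the limit along $u_{k_n}\rightharpoonup\bar u$. Complete continuity \eqref{eqcompletecont} yields $\nabla f(u_{k_n})\to\nabla f(\bar u)$ in $L^2(\Omega)$, and since $u_{k_n+1}=u_{k_n}+o(1)$ in $L^2(\Omega)$ also $u_{k_n+1}\rightharpoonup\bar u$ and $\tilde u_{k_n}\rightharpoonup\tilde u_\infty:=\frac{L\bar u-\nabla f(\bar u)}{L+\alpha}$. Writing $u_{k_n+1}=\chi_{k_n+1}\tilde u_{k_n}$ and letting $n\to\infty$ — the product of the strongly convergent, $L^\infty$-bounded sequence $\chi_{k_n+1}$ and the weakly convergent $\tilde u_{k_n}$ converges weakly in $L^2(\Omega)$, as one checks by testing against $L^\infty(\Omega)$ — gives $\bar u=\bar\chi\,\tilde u_\infty$. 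Since $\supp\bar u\subset\{\bar\chi=1\}$, on $\supp\bar u$ this reads $(L+\alpha)\bar u=L\bar u-\nabla f(\bar u)$, that is $\bar u=-\frac1\alpha\nabla f(\bar u)$, which is the asserted pointwise implication (trivially true where $\bar u=0$).

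For strong convergence I would use $\chi_{k+1}^2=\chi_{k+1}$ to write
\[
 \|u_{k_n+1}\|_{L^2(\Omega)}^2=\int_\Omega\chi_{k_n+1}\tilde u_{k_n}^2\dx=\int_\Omega u_{k_n+1}\tilde u_{k_n}\dx.
\]
Substituting $u_{k_n+1}=u_{k_n}+o(1)$ and $\tilde u_{k_n}=\frac{Lu_{k_n}-\nabla f(u_{k_n})}{L+\alpha}$, and using the weak--strong convergence $\int_\Omega u_{k_n}\nabla f(u_{k_n})\dx\to\int_\Omega\bar u\,\nabla f(\bar u)\dx$, I obtain
\[
 \|u_{k_n+1}\|_{L^2(\Omega)}^2=\frac{L}{L+\alpha}\|u_{k_n}\|_{L^2(\Omega)}^2-\frac1{L+\alpha}\int_\Omega\bar u\,\nabla f(\bar u)\dx+o(1).
\]
Combining with $\|u_{k_n+1}\|_{L^2(\Omega)}^2-\|u_{k_n}\|_{L^2(\Omega)}^2\to0$ and solving for $\|u_{k_n}\|_{L^2(\Omega)}^2$ (here $\alpha>0$ is essential) gives $\|u_{k_n}\|_{L^2(\Omega)}^2\to-\frac1\alpha\int_\Omega\bar u\,\nabla f(\bar u)\dx$; by the previous step $\nabla f(\bar u)=-\alpha\bar u$ on $\supp\bar u$ and $\bar u=0$ elsewhere, so $\int_\Omega\bar u\,\nabla f(\bar u)\dx=-\alpha\|\bar u\|_{L^2(\Omega)}^2$ and hence $\|u_{k_n}\|_{L^2(\Omega)}\to\|\bar u\|_{L^2(\Omega)}$. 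Together with $u_{k_n}\rightharpoonup\bar u$ in the Hilbert space $L^2(\Omega)$ this forces $u_{k_n}\to\bar u$ strongly.

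I expect the main obstacle to be upgrading weak to norm convergence; the decisive device is the identity $\|u_{k+1}\|_{L^2(\Omega)}^2=\int_\Omega u_{k+1}\tilde u_k\dx$, which converts the squared norm into a weak--strong pairing once one iterate is swapped for the other up to an $o(1)$ error, and the convergence $\chi_k\to\bar\chi$, which is exactly where \eqref{eq_ass_lambdak} (via the preceding theorem) is used. The identification $u_{k+1}=\chi_{k+1}\tilde u_k$ and the limit passages are then routine given \cref{lem_char_sol_tau} and the strict convexity of the quadratic subproblem integrand.
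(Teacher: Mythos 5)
Your proof is correct, and its backbone coincides with the paper's: the identity $u_{k+1}=\chi_{k+1}\tilde u_k$ is exactly the paper's relation \eqref{eq510} in disguise, and the remaining ingredients (complete continuity of $\nabla f$, the summability of $\|u_{k+1}-u_k\|_{L^2(\Omega)}^2$ from \cref{thm_proxgrad_basic}, and the convergence $\chi_k\to\bar\chi$ with $|\bar u|_0\le\bar\chi$ supplied by the preceding theorem under \eqref{eq_ass_lambdak}) are the same. Where you genuinely diverge is the upgrade from weak to strong convergence. The paper passes to the limit in \eqref{eq510} directly by invoking \cref{lem_lift_convergence}: a product of an $L^\infty$-bounded, $L^1$-convergent sequence of characteristic functions with a strongly $L^2$-convergent sequence converges strongly in $L^2$ (H\"older alone would only give $L^p$, $p<2$, whence the dominated-convergence/subsequence argument in that lemma), so the right-hand side of \eqref{eq510} converges in $L^2(\Omega)$ and $u_{k_n+1}$ converges strongly outright; this is also where $\alpha>0$ enters. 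You instead exploit $\chi_{k+1}^2=\chi_{k+1}$ to write $\|u_{k+1}\|_{L^2(\Omega)}^2=\int_\Omega u_{k+1}\tilde u_k\dx$, convert this into a weak--strong pairing, deduce $\|u_{k_n}\|_{L^2(\Omega)}\to\|\bar u\|_{L^2(\Omega)}$, and conclude via the Radon--Riesz property of the Hilbert space. Both routes are sound: the paper's is shorter once \cref{lem_lift_convergence} is available, while yours dispenses with that auxiliary lemma at the cost of a slightly longer computation (and of first identifying the weak limit $\bar u=\bar\chi\,\tilde u_\infty$ by testing against $L^\infty(\Omega)$, which requires the $L^2$-boundedness of the products and density of $L^\infty(\Omega)$ in $L^2(\Omega)$ --- both available here since $\meas(\Omega)<\infty$). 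The derivation of the pointwise condition $\bar u=-\frac1\alpha\nabla f(\bar u)$ on $\supp\bar u\subset\{\bar\chi=1\}$ is the same in both arguments.
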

\begin{proof}
If $u_{k+1}(x)\ne0$ then $\alpha u_{k+1}(x) = - (\nabla f(u_k)(x) + L(u_{k+1}(x)-u_k(x)))$. This implies
\[
\chi_{k+1} u_{k+1} =- \chi_{k+1} \frac1\alpha (\nabla f(u_k) + L(u_{k+1}-u_k)).
\]
Adding the equation $(1-\chi_{k+1}) u_{k+1} =0$, yields
\begin{equation}\label{eq510}
	\alpha u_{k+1} =- \chi_{k+1} (\nabla f(u_k) + L(u_{k+1}-u_k)).
\end{equation}
Let now $u_{k_n}\rightharpoonup \bar u$ in $L^2(\Omega)$. Then $\nabla f(u_{k_n})\to \nabla f(\bar u)$ in $L^2(\Omega)$ by complete continuity of $\nabla f$.
In addition, $u_{k+1} - u_k\to 0 $ in $L^2(\Omega)$ by \cref{thm_proxgrad_basic}.
The right-hand side in \eqref{eq510} converges strongly in $L^2(\Omega)$ by \cref{lem_lift_convergence} below,
which implies the strong convergence $u_{k_n}\to \bar u$ in $L^2(\Omega)$.
In addition, in the limit we obtain from \eqref{eq510}
$\alpha\bar u =- \bar\chi \nabla f(\bar u)$.
\end{proof}

Let us compare the properties of limit points $\bar u$ with the necessary optimality conditions \eqref{eq_noc5_comp_tau}--\eqref{eq_noc5_comp_u} according to \cref{thm_noc_sec5}.
The above result only proves the implication \eqref{eq_noc5_nonzero_u} for limit points $\bar u$.
It seems to be impossible to prove the remaining two conditions \eqref{eq_noc5_comp_tau} and \eqref{eq_noc5_comp_u}.
An obvious choice for $s$ in those formulas would be any limit point of $(-\lambda_k)$.
Under the assumption \eqref{eq_ass_lambdak}, we would get $s<0$. However, it seems impossible to prove $\|\bar u\|_0=\tau$:
the mapping $u\mapsto |u|_0$ is merely lower semicontinuous at $u=0$, so that we can only prove $\|\bar u\|_0 \le \tau$.
And it is not clear that the complementarity condition \eqref{eq_noc5_comp_tau} is satisfied in the limit.
In order to prove \eqref{eq_noc5_comp_u}, a natural idea would be to pass to the limit in the condition \eqref{eq_comp_vtilde_iterates}.
At best we can expect to get
\[
  |\bar u(x)|_0 \cdot \left(-\frac1{2(L+\alpha)} \left(L\bar u(x) - \nabla f(\bar u)(x) \right)^2  -s\right) \le 0.
\]
This is different to \eqref{eq_noc5_comp_u} because of the presence of the prox-parameter $L>0$ in the inequality.

We close the section with the following auxiliary result, whis was used in the proof of \cref{thm_strong_conv_iterates}. Note that an application of H\"older inequality
implies strong convergence in $L^p(\Omega)$ only for $p<2$.

\begin{lemma}\label{lem_lift_convergence}
Let $\meas(\Omega)<\infty$.
Let sequences $(\chi_k)$ and $(g_k)$ be given such that $\|\chi_k\|_{L^\infty(\Omega)}\le1$, $\chi_k \to \bar \chi$ in $L^1(\Omega)$, and $g_k\to \bar g$ in $L^2(\Omega)$.
Then $\chi_k g_k \to \bar \chi \bar g$ in $L^2(\Omega)$.
\end{lemma}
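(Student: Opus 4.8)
The plan is to split $\chi_k g_k - \bar\chi \bar g$ in the standard way and control each piece separately, using the boundedness $\|\chi_k\|_{L^\infty(\Omega)}\le 1$ to handle the factor that converges only in $L^2$, and using the finiteness of $\meas(\Omega)$ to upgrade the $L^1$-convergence of $(\chi_k)$ to something that pairs well against the $L^\infty$-limit $\bar g$ is \emph{not} in — so instead I would keep $\bar g$ and pass the burden onto a truncation argument. Concretely, I would write
\[
 \chi_k g_k - \bar\chi\bar g = \chi_k(g_k-\bar g) + (\chi_k-\bar\chi)\bar g.
\]
For the first term, $\|\chi_k(g_k-\bar g)\|_{L^2(\Omega)} \le \|\chi_k\|_{L^\infty(\Omega)}\|g_k-\bar g\|_{L^2(\Omega)} \le \|g_k-\bar g\|_{L^2(\Omega)} \to 0$, which is immediate. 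The whole difficulty is concentrated in the second term $(\chi_k-\bar\chi)\bar g$, because $\bar g\in L^2(\Omega)$ only, while $\chi_k-\bar\chi\to 0$ merely in $L^1(\Omega)$; the naive Hölder pairing would need $\bar g\in L^\infty(\Omega)$.

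The key step, therefore, is a truncation of $\bar g$. Fix $\varepsilon>0$ and choose $M>0$ large enough that $\|\bar g - \bar g_M\|_{L^2(\Omega)} < \varepsilon$, where $\bar g_M := \max(-M,\min(M,\bar g))$ is the pointwise truncation of $\bar g$ at level $M$; this is possible by dominated convergence since $\bar g_M \to \bar g$ pointwise and $|\bar g_M|\le |\bar g| \in L^2(\Omega)$. Then split once more:
\[
 \|(\chi_k-\bar\chi)\bar g\|_{L^2(\Omega)} \le \|(\chi_k-\bar\chi)(\bar g-\bar g_M)\|_{L^2(\Omega)} + \|(\chi_k-\bar\chi)\bar g_M\|_{L^2(\Omega)}.
\]
Since $|\chi_k-\bar\chi|\le 1$ almost everywhere, the first term is bounded by $\|\bar g-\bar g_M\|_{L^2(\Omega)} < \varepsilon$. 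For the second term, $|\bar g_M|\le M$, so
\[
 \|(\chi_k-\bar\chi)\bar g_M\|_{L^2(\Omega)}^2 = \int_\Omega (\chi_k-\bar\chi)^2 \bar g_M^2 \dx \le M^2 \int_\Omega |\chi_k - \bar\chi|^2\dx \le M^2 \int_\Omega |\chi_k-\bar\chi|\dx,
\]
where the last inequality uses $|\chi_k-\bar\chi|\le 1$ so that $|\chi_k-\bar\chi|^2 \le |\chi_k-\bar\chi|$. This last integral is $\|\chi_k-\bar\chi\|_{L^1(\Omega)}\to 0$, so the second term tends to $0$ as $k\to\infty$ for $M$ fixed.

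Putting the pieces together: given $\varepsilon>0$, fix $M$ as above; then for $k$ large enough the second term is below $\varepsilon$, so $\limsup_{k\to\infty}\|(\chi_k-\bar\chi)\bar g\|_{L^2(\Omega)} \le 2\varepsilon$, and since $\varepsilon$ was arbitrary this term vanishes in the limit. Combined with the first (trivial) estimate, $\chi_k g_k \to \bar\chi\bar g$ in $L^2(\Omega)$. The only genuinely delicate point is the truncation step, and it is entirely routine; note that finiteness of $\meas(\Omega)$ is not actually needed for the argument as written here, though it is harmless to keep it in the hypotheses. The remark in the statement about Hölder giving only $L^p$ for $p<2$ is exactly the obstruction this truncation trick circumvents.
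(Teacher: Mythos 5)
Your proof is correct, and it takes a genuinely different route from the paper. The paper extracts a.e.\ convergent subsequences of $(g_k)$ and $(\chi_k)$ together with an $L^2$ dominating function (via the converse of the dominated convergence theorem, \cite[Theorem 4.9]{Brezis11}), applies dominated convergence to $\chi_{k_n} g_{k_n}$, and concludes with a subsequence--subsequence argument. You instead give a direct, quantitative argument: the splitting $\chi_k g_k - \bar\chi\bar g = \chi_k(g_k-\bar g) + (\chi_k-\bar\chi)\bar g$, with the first term killed by the uniform $L^\infty$ bound and the second handled by truncating $\bar g$ at level $M$ and exploiting $|\chi_k-\bar\chi|^2 \lesssim |\chi_k-\bar\chi|$. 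Your approach buys explicit estimates, avoids both the compactness-style extraction and the subsequence trick, and --- as you correctly observe --- shows that $\meas(\Omega)<\infty$ is not needed, so your version is strictly more general. One small slip: from $\|\chi_k\|_{L^\infty(\Omega)}\le 1$ alone you only get $|\bar\chi|\le 1$ a.e.\ (via an a.e.\ convergent subsequence) and hence $|\chi_k-\bar\chi|\le 2$, not $\le 1$; the bound $\le 1$ would require the $\chi_k$ to be actual characteristic functions, which holds in the application but is not part of the hypotheses. This only changes your constants (the first piece is bounded by $2\varepsilon$ and $|\chi_k-\bar\chi|^2\le 2|\chi_k-\bar\chi|$) and does not affect the conclusion.
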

\begin{proof}
The sequences admit pointwise a.e.\@ converging subsequences $(g_{k_n})$, $(\chi_{k_n})$ together with a dominating function $a\in L^2(\Omega)$ with $|g_{k_n}|\le a$,
see \cite[Theorem 4.9]{Brezis11}.
Then $\chi_{k_n} g_{k_n} \to \bar \chi \bar g$ in $L^2(\Omega)$ by dominated convergence.
A subsequence-subsequence argument finishes the proof.
\end{proof}

\section*{Acknowledgement}

The author wants to thank an anonymous referee for pointing out a serious error in the submitted manuscript.

\section*{Data availability statement}

Data sharing not applicable to this article as no datasets were generated or analysed during the current study.

\printbibliography

\end{document}